\newtheorem{thm}{Theorem}[section]
\newtheorem{theorem}{Theorem}
\newtheorem{lm}[thm]{Lemma}
\newtheorem{pr}[thm]{Proposition}
\theoremstyle{definition}
\newtheorem{df}[thm]{Definition}
\theoremstyle{remark}
\newtheorem{rem}[thm]{Remark}
\newtheorem{clm}[thm]{Claim}
\newcommand{\xhookdoubleheadrightarrow}[2][]{%
  \lhook\joinrel
  \ext@arrow 0359\rightarrowfill@ {#1}{#2}%
  \mathrel{\mspace{-15mu}}\rightarrow
}
\author{Karol Duda}
\title{A computable version of Hall's Harem Theorem and Geometric von Neumann Conjecture}
\date{}
\begin{document}
\begin{abstract} 
We prove a computable version of the Hall Harem Theorem where the matching realizes a unary function with some special properties of its cycles.  
We apply it to non-amenable computable coarse spaces. 
As a result, we obtain a computable version of the geometric von Neumann conjecture.  
\end{abstract}

\maketitle

\section{Introduction} 

Analysis of classical mathematical topics  from the point of view of complexity of various types is one of the major trends of modern mathematical logic. 
Amenability is essentially fruitful from this point of view
(see \cite{kechrisN}, \cite{BK}, \cite{KPT},
\cite{MU}, \cite{CK}, \cite{HPP}, \cite{HKP1} and \cite{HKP2}).
Our research belongs to computable amenability. 
This is a topic where computable versions of fundamentals of amenability are studied; see papers of M. Cavaleri \cite{Cav1, Cav2}, N. Moryakov \cite{mor}, and the author (together with A. Ivanov) \cite{DuI, CPD}. 
In the present paper, we are particularly interested in \textit{the von Neumann conjecture}:  
the statement that each non-amenable group contains a subgroup isomorphic to $\mathbb{F}_2$.  

Let $X$ be a set and let $G$ be a group which acts on $X$ by permutations.
Then the $G$-space $(G,X)$ is called {\em amenable} if there exists a mean $m: \ell^{\infty}(X)\rightarrow \mathbb{R}$ which is invariant under the action of $G$.
In particular, if $X=G$, then the group $G$ is called amenable. 
This definition is due to to J.~von Neumann \cite{Neumann1929}. 
In 1938, Tarski proved his famous Tarski Alternative Theorem \cite{T1}, stating that a $G$-space $X$ is amenable if and only if $X$ does not admit a $G$-paradoxical decomposition. 
Some computable versions of the Tarski Alternative Theorem have been found by the author together with A. Ivanov in \cite{CPD}. 
Roughly speaking \textit{effective paradoxical decompositions} of $G$-spaces of computable transformations were introduced and it was proved that their existence is equivalent to non-amenability.

In \cite{Neumann1929} J.~von Neumann  proved that amenable groups cannot contain a subgroup isomorphic to the two generated free group $\mathbb{F}_2$.
The question whether the converse holds i.e.~whether each non-amenable group contains a subgroup isomorphic to $\mathbb{F}_2$, became popularized as the \textit{von Neumann Conjecture} in the late 1950-s (see e.g. \cite{day}). 
It was answered negatively with the construction of the Tarski monster group by Olshanskii in 1980 \cite{ols}. 
However, in 1999 Whyte \cite{Whyte} proved the geometric version of this conjecture showing existence of $d$-regular forests in non-amenable metric spaces. 
The latter result is now called the \textit{Geometric von Neumann Conjecture}. 
The name corresponds to an easy corollary that so-called the {\em wobbling group} of the space has a subgroup isomorphic to $\mathbb{F}_2$ with a semi-regular action.  
In 2018 Schneider \cite{schndr} generalized Whyte's theorem and the corollary to coarse spaces. 

The main results of our paper are: 
\begin{itemize} 
\item computable versions of Schneider's theorem and the corollary about wobbling groups, 
\item a computable version of so called Hall's harem theorem with controlled sizes of cycles. 
\end{itemize} 
The Hall Harem Theorem is a generalization of Hall's marriage theorem, see \cite[Theorem H.4.2.]{csc}. 
We consider it together with the geometric von Neumann conjecture just because of the well-known fact that Tarski's alternative theorem can be obtained by an application of Hall's harem theorem. 
It turns out that this connection lifts to the level of computability \cite{CPD}, and, on the other hand, to the geometric level.  

\subsection{A computable version of Scheider's theorem}

The theorem of Whyte  mentioned above is as follows. 

\begin{thm}[Geometric Von Neumann Conjecture, \cite{Whyte}]
A uniformly discrete metric space of uniformly bounded geometry is non-amenable if and only if it admits a partition whose pieces are uniformly Lipschitz embbeded copies of the $4$-regular tree.
\end{thm}
Since we concentrate on Schneider's generalization of this theorem \cite{schndr} we skip thorough explanation of the terminology used by Whyte. 
Schneider's theorem \cite{schndr} concerns  
non-amenable coarse spaces. 
We will introduce them to the reader.  
This would give rough understanding of the topic of the Geometric von Neumann Conjecture. 

Let us recall some terminology from  coarse geometry (according to \cite{schndr}). 
For a relation $E\subseteq X\times X$ on a set $X$ and $x\in X,A\subseteq X$, let
$$	
E[x]:=\{y\in X|(x,y)\in E\},
$$
and 
$$
E[A]:=\bigcup\{E[x]|x\in A\}.
$$
Furthermore, we denote by $\Gamma(E)$ the graph associated with the relation $E$, i.e. $\Gamma(E)=(X,E)$. 
\begin{df}
A \textit{coarse space} is a pair $(X,\mathcal{E})$ consisting of a set $X$, and a collection $\mathcal{E}$ of subsets of $X\times X$ (called \textit{entourages}) such that: 
\begin{itemize}
\item the diagonal $\Delta_{X}\in \mathcal{E}$;
\item if $F\subseteq E \in \mathcal{E}$, then also $F\in\mathcal{E}$;
\item if $E,F\in\mathcal{E}$ then $E\cup F, E^{-1}, E\circ F\in\mathcal{E}$.
\end{itemize} 
\end{df}

\begin{df}

A coarse space $(X,\mathcal{E})$ is said to have \textit{bounded geometry} if for each $E\in\mathcal{E}$ and every $x\in X$ the set $E[x]$ is finite. 
\end{df} 
Note that metric spaces are examples of coarse spaces: for a given metric space $(X,\rho )$, a coarse space $(X,\mathcal{E})$ is obtained by setting:
$$
\mathcal{E}:= \{E\subseteq X\times X| \sup \{ \rho (x,y)|(x,y)\in E\}\leq\infty\} . 
$$

\begin{df} 
A coarse space $(X,\mathcal{E})$ of  bounded geometry is called \textit{amenable} if for every  $\theta >1$ and every $E\in\mathcal{E}$ there exists a non-empty finite $F\subseteq X$ such that $|E[F]|\leq \theta|F|.$
\end{df}

\begin{thm} (\cite[Theorem 2.2]{schndr}) 
Let $d\in\mathbb{N}$ and $d\ge 3$. 
A coarse space $(X,\mathcal{E})$ of bounded geometry is not amenable if and only if there is $E\in\mathcal{E}$ such that $\Gamma(E)$ is a $d$-regular forest.
\end{thm} 
To see why this theorem generalizes the theorem of Whyte, note that connected components of $\Gamma(E)$ can be viewed as pieces of the partition from Whyte's theorem. 

In the proof of his result, Schneider takes a symmetric entourage $E$ such that $|E(F)|\geq (d-1)|F|$ for all finite $F\subset X$. Such an entourage exists by the fact 
that when $(X,\mathcal{E})$ is not amenable, then for every finite $d$ there is a symmetric entourage such that $|E(F)|\geq d|F|$ for each finite $F\subseteq X$ (see discussion before Proposition 2.1 in \cite{schndr}). 
Now the bipartite graph $\Gamma:=(X,X,R)$  where $R:=E\setminus \Delta_X$, satisfies Hall's Harem condition for $d-1$: 
\begin{quote} 
for all finite subsets $Y\subset X$ the following inequality holds: 
$|R [Y] |\geq (d-1) |Y|$. 
\end{quote} 
It follows by Hall's harem theorem that there exists an entourage which graph is $d$-regular. 
Moreover, the graph can be transformed into a cycle-free member of $\mathcal{E}$ without losing $d$-regularity. 

In our paper we effectivize this scheme. 
The final goal is formulated as follows. 
(In the formulation the term {\em a highly computable graph} just means a computable locally finite graph where the valency of each vertex can be effectively recognized.) 

\begin{theorem} \label{fg} 
Let $d\geq 3$. 
Let $(\mathbb{N},\mathcal{E})$ be a non-amenable coarse space of a bounded geometry. 
Assume that there exists a highly computable symmetric $E\in\mathcal{E}$ such that for every finite $F\subseteq \mathbb{N}$ we have $|E[F]|\geq (d+2)|F|$. 

Then there exists a computable $E'\in\mathcal{E}$ such that $\Gamma(E')$ is a $d$-regular forest. 
Moreover, there exist an algorithm which for any $m,n\in\mathbb{N}$ recognizes if $m$ and $n$ are in the same connected component of $\Gamma(E')$.
\end{theorem}

To see that this statement is a computable version of Schneider's result, we remind the reader that as we have already mentioned, when $(X,\mathcal{E})$ is not amenable, then for every finite $d$ there is a symmetric entourage such that $|E(F)|\geq d|F|$ for every finite $F\subseteq X$. 

\bigskip 

We now translate Theorem A into a statement about wobbling groups. 
Let us recall that given a coarse space $(X,\mathcal{E})$ its \textit{wobbling group} is 
$$
\mathcal{W}(X,\mathcal{E}):=\{\alpha\in Sym(X)|\{(x,\alpha(x))|x\in X\}\in \mathcal{E}\}.
$$ 
Wobbling groups have attracted growing attention in recent years \cite{Jusch1}, \cite{Jusch2}, 
\cite{schndr}. 
One of motivating points is a possibility of a nice reformulation of  Whyte’s result in terms of semi-regular subgroups. 
Recall that a subgroup $G \leq Sym(X)$ is said to be \textit{semi-regular} if no non-identity element of $G$ has
a fixed point in $X$.
Theorem 6.1 in \cite{Whyte} states that  a uniformly discrete metric space $X$ of uniformly bounded geometry is non-amenable if and only if $\mathsf{F}_2$ is isomorphic to a semi-regular subgroup of $\mathcal{W}(X)$.
Corollary 2.3 of \cite{schndr} states  the same for coarse spaces. 
Note that these formulations look like von Neumann's conjecture! 

The following theorem is a computable version of Corollary 2.3 of \cite{schndr}.

\begin{theorem} 
Let $(\mathbb{N},\mathcal{E})$ be a non-amenable coarse space of a bounded geometry. 
Assume that there exists a highly computable symmetric $E\in\mathcal{E}$ such that $|E[F]|\geq 6|F|$ for all finite $F\subset \mathbb{N}$.

Then there are two computable permutations $\sigma,\pi\in Sym(\mathbb{N})$ such that $\langle\sigma,\pi\rangle$ is a free semi-regular subgroup of $\mathcal{W}(\mathbb{N},\mathcal{E})$.
\end{theorem}

\subsection{Hall's harem theorem} 
Schneider's scheme presented above is based on an application of the Hall harem theorem. 
The latter describes a condition which is equivalent to existence of a perfect $(1,k)$-matching of a bipartite graph; for example, see Section III.3 in \cite{BB}, Theorem 2.4.2 in \cite{1986C1} or Theorem H.4.2 in \cite {csc}. 

In order to obtain a computable version of Schneider's theorem some special form of Hall's harem theorem is required.
Assume that the graph is of the form $(\mathbb{N},\mathbb{N}, E)$, where $E$ denotes the set of edges between natural numbers. 
Any matching clearly realizes a $k$ to $1$ function, say $f: \mathbb{N} \rightarrow \mathbb{N}$. 
The main concern of our paper are properties which can be added to such a function. 
In particular we are especially interested in fast reaching cycles of $f$. 
This will be formalized below as the property of {\em controlled sizes of cycles}. 
As a result, the matching obtained in this way, is realized by a computable subset of $\mathbb{N}^2$ with the additional property that there is an algorithm which decides whether two natural numbers $m$ and $n$ belong to the same connected component of $f$.  

The special form of Hall's harem theorem mentioned in the previous paragraph is the heart of our paper, and the proof of it  (as we evaluate) is 75 per cent of the text. 
In Sections 2 and 3 (a short one) we give an elementary introduction to both graph theoretic and computability theoretic issues. 
In Section 4 we formulate the computable version of Hall's harem theorem, which is our goal. 
There we also describe the strategy of the proof. 
It is worth noting here that the proof is in the standard style of building of computably enumerable sets, as it is usually done in computability theory. 
We worn the reader that the construction is technical and difficult.   
A kind of priority arguments is used in it. 
The proof of it is given in Sections 5 - 8. 
In Section 9 we connect this material with computable amenability. 

\bigskip 

The preprint \cite{HallControlled} contains a  version of Hall's harem theorem with controlled sizes of cycles, where all computability elements are ruled out. 
Although the proof given in it follows Sections 5 - 8 below, but it is slightly easier. 
We should mention that these two versions are incomparable. 
They complement each other and naturally form two parts of a research project. 
The paper \cite{HallControlled} can be used as a helpful companion of the present paper. 
Further details concerning these issues are given in Sections 2 and 4 below.

\begin{rem} 
In the final remark of this introduction we mention other investigations concerning Hall's theorems in the context of computability theory.  
The topic was initiated in papers of Manaster and Rosenstain \cite{MR} and Kierstead \cite{hak}. 
The latter paper was our starting point. 
Hall's theorems have become quite popular in complexity theory. 
For example see \cite{NPmat},  \cite{SMp}.

From the point of view of reverse mathematics Hall's theorems are studied in \cite{CSM}, \cite{MTRM} and \cite{RMMP}. 
\end{rem} 

\section{Preliminaries} 

\subsection{General preliminaries}
To introduce the reader to the subject we recall Hall's Harem theorem. 
We begin with some necessary definitions from graph theory. 
We mostly follow the notation of \cite {csc}. 
A graph $\Gamma=(\mathbf{\Gamma},E)$ is called a \textit{bipartite graph} if the set of vertices $\mathbf{\Gamma}$ is partitioned into sets $U$ and $V$ in such a way, that the set of edges $E$ is a subset of $U\times V$. 
We denote such a bipartite graph by $\Gamma=(U,V,E)$.

From now on we concentrate on bipartite graphs.  
Note that although the definitions below concern this case, they usually have obvious extensions to all ordinary graphs. 

A subgraph of $\Gamma$ is a triple $\Gamma' = (U',V',E')$ with  $U'\subseteq U$, $V'\subseteq V$, $E' \subseteq E$. 
When $\Gamma'$ is such a subgraph but only the sets of its vertices are specified (i.e. for example $\Gamma'=(U',V')$), this means that $E' = E \cap (U'\times V')$. 
Then we say that the subgraph $\Gamma'$ is induced in $\Gamma$ by the set of vertices. 

Let $\Gamma=(U,V,E)$. 
We will say that an edge $(u,v)$ is \textit{incident} to vertices $u$ and $v$. In this case we say that $u$ and $v$ are \textit{adjacent}. 
When two edges $(u,v),(u',v')\in E$ have a common incident vertex we say that $(u,v),(u',v')$ are also \textit{adjacent}. 
A sequence $x_1,x_2,\ldots, x_n$ is called a \textit{path}, if each pair $x_i,x_{i+1}$ is adjacent, $1\leq i< n$.

Below we will denote the set of vertices $\mathbf{\Gamma}$ by the same letter with the graph as a structure, i.e. $\Gamma$. 
Given a vertex $x\in \Gamma$, the \textit{neighborhood} of $x$ is a set 
$$
N _{\Gamma}(x)=\{y\in \Gamma : (x,y)\in E\}.
$$  
For subsets $X\subset U$ and $Y\subset V$, we define the neighborhood 
$N _{\Gamma}(X)$ of $X$ and the neighborhood $N _{\Gamma}(Y)$ of $Y$ by 
\[ 
N _{\Gamma}(X)=\bigcup\limits_{x\in X} N _{\Gamma}(x) \, \, \text{ and } \,  \, N _{\Gamma}(Y)=\bigcup\limits_{y\in Y} N _{\Gamma}(y).
\]  
We drop the subscript $\Gamma$ if it is clear from the context.

\begin{df}\label{connected}
A subset $X$ of $U$ (resp. of $V$) is called \textit{connected} if for all $x,x'\in X$ there exist a path $x=p_0,p_1,\ldots ,p_k=x'$ in $\Gamma$ such that $p_i\in X\cup N_{\Gamma}(X)$ for all $i\leq k$.
\end{df} 
Note here that this definition concerns only bipartite graphs.
This notion belongs to Kierstead, \cite{hak}.
In the case of connected ordinary graphs we take standard definitions.   

For a given vertex $v$, the \textit{star} of $v$ is the subgraph  
$S=(\{v\}\cup N_{\Gamma}(v),E')$ of $\Gamma$, with \[ 
E'=((\{v\}\cup N_{\Gamma}(v))\times (\{v\}\cup N_{\Gamma}(v)))\cap E. 
\]
A $(1,k)$-\textit{fan} is a subset of $E$ consisting of $k$ edges incident to some vertex $u\in U$. 
We say that $u$ is the \textit{root} of the fan, and when $(u,v)$ belongs to the fan we call $v$ a \textit{leaf} of it.

\begin{df}
\begin{itemize} 
\item An $(1,k)$-\textit{matching} from $U$ to $V$ is a collection $M$ of pairwise disjoint $(1,k)$-fans. 
\item The $(1,k)$-matching $M$ is called \textit{left perfect} (resp. \textit{right perfect}) if each vertex from $U$ is a root of a fan from $M$ (resp.  each vertex from $V$ belongs to exactly one fan of $M$).  
\item The $(1,k)$-matching $M$ is called \textit{perfect} if it is both left and right perfect.
\end{itemize} 
\end{df} 
We often view an $(1,k)$-matching as a bipartite graph $M$ where the fan of $u\in U$ is the $M$-\textit{star} of $u$, i.e. the star of $u$ in the subgraph $M$. 
We emphasize that a perfect $(1,k)$-matching from $U$ to $V$ is a set $M\subset E$ satisfying following conditions: 
\begin{enumerate}[label={(\arabic*)}]
\item for each vertex $u \in U$ there exists exactly $k$ vertices $v_1,\ldots, v_k \in V$ such that \\ 
$(u,v_1),\ldots,(u,v_k)\in M$; 
\item for all $v \in V$ there is a unique vertex $u\in U$ such that $(u,v)\in M$.
\end{enumerate} 

Originally Hall's Marriage Theorem (see e.g. \cite[Theorem 2.1.2]{diestel}, \cite[Section III.3]{BB}, \cite[Theorem 2.4.2.]{1986C1}) provides us a condition for existence of left perfect $(1,1)$-matching in a finite bipartite graph. 
We are interested in so called \textit{Hall's Harem theorem}, which is a generalization of Hall's Marriage theorem to the case of perfect $(1,k)$-matchings for the locally finite infinite graphs. 

One says that graph $\Gamma=(V,E)$ is \textit{locally finite}, if for all vertices $x\in \Gamma$, the neighborhood $N(x)$ is finite. Note that if $\Gamma$ is locally finite, then $N(X)$ is finite for any finite $X\subset V$.

\begin{thm}[Hall's Harem theorem]\cite[Theorem H.4.2.]{csc}
Let $\Gamma=(U,V,E)$ be a locally finite graph and let $k\in \mathbb{N},\; k\geq 1$. The following conditions are equivalent: 
\begin{enumerate}[label={(\roman*)}]
\item For all finite subsets $X\subset U$, $Y\subset V$ the following inequalities holds: $|N(X)|\geq k|X|$, $|N(Y)|\geq \frac{1}{k}|Y|$.
\item $\Gamma$ has a perfect $(1,k)$-matching.
\end{enumerate} 
\end{thm} 

The first condition in this formulation is known as \textit{Hall's $k$-harem condition}. 

It is a crucial fact that the theorem  holds for locally finite infinite graphs.
At this point we inform the reader that the statement of the Hall Marriage Theorem does not work for infinite graphs in general (see e.g. \cite[S.2 Ex.6]{diestel}).  Nevertheless, there are versions of this theorem for graphs of any cardinality \cite{ahr}. 

\subsection{Reflections} 
Throughout the paper, $d$ is a natural number greater than $1$.
When $\Gamma=(U,V,E)$ is a bipartite graph, we always assume that $V\subseteq U\subseteq\mathbb{N}$, i.e. $V$ is a subset of the right copy of $U$. 

The following notation substantially simplifies the presentation.
For every vertex $v\in V$ there exist a vertex from $U$ which is a {\em copy} of $v$ (i.e. the same natural number), we denote it by $u_v$.  
If a vertex $u\in U$ has the copy in $V$ then we denote this copy by $v_u$. 

\begin{df} 
The graph $\Gamma=(U,V,E)$ is called $U$-\textit{reflected} if $V$ is a subset of the right copy of $U$ and for every edge $(u,v)\in E$ with $v_{u}\in V$ the edge $(u_{v},v_{u})$ is also in $E$. 
If additionally $V$ is the right copy of $U$, then $\Gamma$ is called a \textit{fully reflected} bipartite graph. 
\end{df}
\noindent 
It is worth noting that when $\Gamma' = (U',V')$ is an induced subgraph of an $U$-reflected  $\Gamma=(U,V,E)$ such that $V'$ is a subset of the right copy of $U'$ then $\Gamma'$ is $U'$-reflected. 

Theorem \ref{hhco} below states that in the case of fully reflected bipartite graphs Hall's harem condition allows us to force some additional properties at the expense of obtaining a perfect $(1,(d-1))$-matching instead of a $(1,d)$-matching.
This is an intermediate statement that avoids computability issues as much as it is possible, the proof of this theorem is available in \cite{HallControlled}. 
We will now give necessary details.

\subsection{Controlled sizes of cycles.}
Let $f$ be a function. 
If for some $i\neq 0$ we have $f^i(u)=u$ then we will say that $u$ is a \textit{periodic point}of $f$. 
For such $u$ and the smallest $i\neq 0$ with $f^i(u)=u$, we say that $\{u,f(u),\ldots, f^{i-1}(u)\}$ is a {\em cycle} of $f$.
Any $(1,(d-1))$ matching can be considered as a $(d-1)$ to $1$ function $f:\mathbb{N}\rightarrow \mathbb{N}$. Moreover, if this matching is perfect, such a function $f$ is total and surjective.
We roughly want to show that given a fully reflected bipartite graph satisfying Hall's $d$-harem condition, there is a perfect $(1,(d-1))$ matching $f:\mathbb{N}\rightarrow\mathbb{N}$, such that for each $u$ there exist $i\geq 0$ such that $f^i(u)$ is a periodic point.

\begin{df}\label{cycles}
Let $f:\mathbb{N}\rightarrow \mathbb{N}$ be a $(d-1)$ to $1$ function. 
We say that $f$ has \textit{controlled sizes of its cycles} if each of the following conditions holds:  
\begin{enumerate}[(i)]
\item $f^2(1)=1$;
\item if $n\geq 2$ and $f^i(n)=n$ then $i\leq n $;
\item if $n\geq 2$ and for all $i\leq n$ we have $f^i(n)\neq n$ then there exist $k\leq 2n$ and $l\leq n$ such that $f^{k+l}(n)=f^k(n)$.
\end{enumerate}
\end{df}  
The following theorem is proved in \cite{HallControlled}. 

\begin{thm}\label{hhco}
Let $\Gamma=(U,V,E)$ be a locally finite bipartite graph such that: 
\begin{itemize}
\item both $U$ and $V$ are identified with $\mathbb{N}\setminus\{0\}$, 
\item $E$ does not contain edges of the form $(u,v_u)$,  
\item  $\Gamma$ is fully reflected,  
\item $\Gamma$ satisfies Hall's $d$-harem condition.
\end{itemize}
Then there exist a perfect $(1,d-1)$-matching of $\Gamma$, which realizes a $(d-1)$ to $1$ function $f:\mathbb{N}\rightarrow \mathbb{N}$ with controlled sizes of its cycles.
\end{thm} 

This is the theorem which effective version is the goal of the present paper. 

\section{Computable versions of Hall's \textit{d}-harem theorem.}\label{compre}

Before we state a computable version of Theorem $\ref{hhco}$ we describe our approach to computability of classical Hall's $d$-harem theorem from \cite{CPD}.

Let us consider the following definition. 
It is standard in computability theory. 

\begin{df} 
A graph $\Gamma$ with the set of vertices $\Gamma$ is \textit{computable} if there exists a bijective function $\nu: \mathbb{N}\rightarrow \Gamma$ such that the set 
$$
R:=\{(i,j): (\nu(i),\nu(j))\in E\} 
$$
is computable. \\ 
A bipartite graph $\Gamma=(U,V,E)$ (with 
$\Gamma = U\, \dot{\cup} \, V$) is \textit{computably bipartite} 
if $\Gamma$ is computable and the set of $\nu$-numbers of $U$ is computable.
\end{df} 

To formulate a computable version of Hall's Harem theorem we also need the following definition of Kierstead \cite{hak}. 

\begin{df}
A locally finite computable graph $\Gamma$ is called \textit{highly computable} if the function  
$g(n)=|N_{\Gamma}(\nu(n))|$, $n\in\mathbb{N}$, is a computable function $g: \mathbb{N}\rightarrow \mathbb{N}$. 
\end{df} 

It is worth noting here that this definition easily implies the existence of an algorithm which for every $m$ and $n$ 
computes the $n$-ball of $\nu (m)$ in $\Gamma$.  

Note that a computably bipartite (resp. highly computable) graph $\Gamma$ with infinite $U$ and $V$ can be always presented in the form $(\mathbb{N},\mathbb{N},E)$ with computable $E$. 
Then induced subgraphs of $\Gamma$ are of the form $(U,V, E\cap (U\times V))$. 
Assuming that $U$ and $V$ are computable subsets of $\mathbb{N}$, the corresponding subgraph admits a computably bipartite (resp. highly computable) presentation.   
Typically we would not insist on finding the latter; it would suffice to know that $U$ is taken from the left copy of $\mathbb{N}$, but $V$ is taken from the right one. 

Manaster and Rosenstein \cite{MR} showed that there is a computably bipartite, highly computable graph that satisfies Hall's condition for a bipartite graph to have a
$(1,1)$-matching, but has no computable $(1,1)$-matching. 

Kierstead \cite{hak} found the natural extension of Hall's condition, called \textit{computable expanding Hall's condition}, which allows us to construct computable $(1,1)$-matchings.

Following \cite{CPD}, in this paper we focus on computable perfect $(1,k)$-matchings. 

\begin{df}\label{cpkm}
Let $\Gamma=(U,V,E)$ be a computably bipartite graph. 
A perfect $(1,k)$-matching $M$ from $U$ to $V$ is called a \textit{computable perfect} $(1,k)$-\textit{matching} if $M$ is a computable set of pairs.
\end{df} 
Observe that computable perfectness exactly means that there is an algorithm which 
\begin{itemize}
\item for each $i\in U$, finds the tuple $(i_1,i_2, \ldots, i_k)$ such that $(i,i_j)\in M$, for all $j=1,2,\ldots, k$;
\item when $i \in V$ it finds $i'\in U$ such that $(i',i)\in M$.
\end{itemize} 

In \cite{CPD} we introduced the following condition, which implies the existence of computable perfect $(1,k)$-matchings in highly computable bipartite graphs.
For $k=1$ it was formulated earlier in \cite{hak}. 

\begin{df} 
A bipartite graph $\Gamma=(U,V,E)$ satisfies 
the \textit{computable expanding Hall's harem condition with respect to $k$} 
(denoted $c.e.H.h.c.(k)$), if and only if there is a total, computable function 
$h: \mathbb{N} \rightarrow \mathbb{N}$ such that:
\begin{itemize}
\item $h(0)=0$
\item for all finite sets $X\subset U$, the inequality $h(n)\leq |X|$ implies $n\leq |N(X)|-k|X|$
\item for all finite sets  $Y\subset V$, the inequality $h(n)\leq |Y|$ implies $n\leq |N(Y)|-\frac{1}{k}|Y|$.
\end{itemize}
\end{df}

In \cite{CPD} we have proven the following.

\begin{thm}\label{HC}
If $\Gamma=(U,V,E)$ is a highly computable bipartite graph satisfying the $c.e.H.h.c.(k)$, then $\Gamma$ has a computable perfect $(1,k)$-matching. 
\end{thm} 

\section{A computable version of Hall's harem Theorem with cycles, preliminaries}
We assume that $\Gamma=(U,V,E)$ is a bipartite graph, such that $V\subseteq U\subseteq\mathbb{N}$, i.e. $V$ is a subset of the right copy of $U$.

\subsection{Main Theorem and the aim of the construction}

In the proof of Theorem \ref{fg} we need a computable perfect $(1,(d-1))$ matching which realizes a function $f:\mathbb{N}\rightarrow\mathbb{N}$, such that the following conditions are satisfied:
\begin{enumerate}[(I)]
\item $\{n\, | \, n \text{ is a periodic point}\}$ is computable;
\item $\{(n,m)\, | \, n \text{ is a periodic point }, m \text{ is in the cycle of } n\}$ is computable;
\item the set $\{(n,m)\, | \, f^m(n) \text{ is a periodic point}\}$ is computable and its first coordinates cover $\mathbb{N}\setminus \{ 0\}$. 
\end{enumerate}
Moreover, the matching has to be constructed by a uniform algorithm.

In fact it is easy to see that a computable $(d-1)$ to $1$ function with controlled sizes of its cycles (Definition \ref{cycles})
 satisfies conditions (I)--(III).

The following theorem is a kind of amalgamation of Theorem \ref{hhco} and Theorem \ref{HC}. 

\begin{thm}[Main Theorem]\label{ehhc}
Let $\Gamma=(U,V,E)$ be a highly computable bipartite graph, such that: 
\begin{itemize}
\item both $U$ and $V$ are identified with $\mathbb{N}\setminus\{0\}$
\item $E$ does not contain edges of the form $(u,v_u)$
\item  $\Gamma$ is fully reflected,  
\item $\Gamma$ satisfies the $c.e.H.h.c.(d)$.
\end{itemize}
Then there exist a computable perfect $(1,d-1)$-matching of $\Gamma$, which realizes a computable $(d-1)$ to $1$ function $f:\mathbb{N}\rightarrow \mathbb{N}$ with controlled sizes of its cycles.
\end{thm} 

The proof of the theorem will be finished in  Section \ref{pMT}. 
The next sections focus on a construction used in it. 

\subsection{Notation used in the construction} 
Since this construction is highly technical, 
we start with a list of the notation. 
We do not insist on a thorough inspection of it. 
In the beginning a hasty view will suffice. 
\begin{itemize}

\item $M$ is a perfect matching that we construct.

\item $M_{n-1}$ is a set of $(1,d-1)$-fans added to $M$ at the end of the $n$-th step. Thus $M=\bigcup\limits_{n=1}^{\infty}M_{n-1}$.

\item $\Gamma^{(0)}=(U^{(0)},V^{(0)},E^{(0)})$ is the original graph $\Gamma$.

\item $U^{(n)}:=U^{(n-1)}\setminus \{u\in U^{(n-1)}: \exists v\in V^{(n-1)}, (u,v)\in M_{n-1}\}$.

\item $V^{(n)}:=V^{(n-1)}\setminus \{v\in V^{(n-1)}: \exists u\in U^{(n-1)}, (u,v)\in M_{n-1}\}$.

\item $\Gamma^{(n)}=(U^{(n)},V^{(n)})$. 
We will see that $\Gamma^{(n)}$ is $U^{(n)}$-reflected.

\item After the $n$-th step we obtain decompositions 
$U^{(n)}= U^{(n)\star}\, \dot{\cup} \, U^{(n)\perp}$ and  
$V^{(n)}= V^{(n)\star}\, \dot{\cup} \, V^{(n)\perp}$,  
where we say that $U^{(n)\perp}$ consists of elements from $U^{(n)}$ which might spoil Hall's $d$-harem condition for $\Gamma^{(n)}$. 

\item Put $U^{(0)\perp} =\emptyset$ and $V^{(0)\perp}=\emptyset$ 
(since $\Gamma^{(0)}$ is $\Gamma$, i.e. it satisfies Hall's $d$-harem condition). 

\item $\Gamma^{(n)\perp}$ is a graph with the sets of vertices $(U^{(n)\perp},V^{(n)\perp})$ and the set of edges corresponding to $(1,d-1)$-fans with roots denoted by $u^{\perp}_i\in U^{(n)\perp}$. 

\item When $U^{(n)\perp}\setminus U^{(n-1)\perp}$ is not empty,  $U^{(n)\perp}\setminus U^{(n-1)\perp}=\{u^{\perp}_{n-1}\}$ and $V^{(n)\perp}\setminus V^{(n-1)\perp}$ consists of leaves $\{v^{\perp}_{n-1,i}:1\leq i\leq d-1\}$. 

\item  $\Gamma^{(n)\star}:=(U^{(n)}\setminus U^{(n)\perp}, V^{(n)}\setminus V^{(n)\perp})$. 
We will see that $\Gamma^{(n)\star}$ satisfies Hall's $d$-harem condition.

\item During the construction of $M_{n}$ we will define fans $M^j_{n}$, $j\leq n+1$. 
The graph $M_{n}$ is the union of them. 

\item $M^j_{n}$ is a fan consisting of edges denoted by $\{(u^{j}_n,v^j_{n,i}): i\leq d-1\}$.

\item $u_n$ is a starting vertex of the $n$-th step, it is also denoted by $u^0_n$.

\item For any subgraph $\Gamma'$ of $\Gamma^{(n)}$ we denote by $\Gamma'(-u^{0}_n,\ldots, -u^{j}_n)$ the graph obtained from $\Gamma'$ by removal of the $(1,d-1)$-fans of $M_n$ with roots $u^{0}_n,\ldots, u^{j}_n$. 

\item For any subgraph $\Gamma'=(U',V')$ of $\Gamma^{(n)}$ and any $u_j^{\perp}\in U^{(n)\perp}$ we denote by $\Gamma'(+u_j^{\perp})$ the graph induced in $\Gamma^{(n)}$ by the sets of vertices $U'\cup\{u_j^{\perp}\}$ and $V'\cup\{v^{\perp}_{j,i}: 1\leq i\leq d-1\}$. 

\item For any subgraph $\Gamma'=(U',V')$ of $\Gamma^{(n)}$ and any vertex $v\in V^{(n)}$ we denote by $\Gamma'(+v)$ (resp. $\Gamma'(-v)$) the graph induced in $\Gamma^{(n)}$ by the sets of vertices $U'$ and $V'\cup\{v\}$ (resp. $V'\setminus \{ v\}$). 

\item We denote by $h(x)$ a witness of $c.e.H.h.c.(d)$ for $\Gamma$. 
At the end of each step we produce $\widehat{h}(x)$ (possibly with some index) which is a witness of $c.e.H.h.c.(d)$ for $\Gamma^{(n)\star}$, defined with the help of $h$. 
See Lemma \ref{2nd} for details. 

\item The elements adjacent to $u^{j+1}_n$ in the matching $\mathfrak{M}^2_{n}$ are denoted by $\dot{v}^{j+1}_{n,1},\ldots, \dot{v}^{j+1}_{n,d}$. 
The element $\dot{v}^{j+1}_{n,1}$ is a candidate for $v^{j+1}_{n,1}$. 

\item $\mathcal{B}^{(n)}(u_n)$ (resp. $\mathcal{S}^{(n)}(u_n)$) is the ball (resp. sphere) in $\Gamma^{(n)}$  with the center $u_n$ and the radius $\max\{4h(3d(n+1))+3,5\}$.

\item $\mathcal{B}^{(n)}(u^{l+1}_n)$ (resp. $\mathcal{S}^{(n)}(u^{l+1}_n)$) is the ball (resp. sphere) in $\Gamma^{(n)}(-u^{0}_n,\ldots, -u^{l}_n)$  with the center $u^{l+1}_n$ and the radius $\max\{4h(3d(n+1))+3,5\}$.

\item $\mathfrak{M}^1_n$ denotes a finite $(1,d)$-matching in the bipartite graph 
$\mathcal{B}^{(n)\star}(u_n)=\mathcal{B}^{(n)}(u_n)\cap \Gamma^{(n)\star}$. 
It is obtained in such a way, that it satisfies the conditions of the perfect $(1,d)$-matchings for all vertices that are at the distance less than $\max\{4h(3d(n+1))+3,5\}$ from $u_n$.

\item $\mathfrak{M}^2_n$ denotes a finite $(1,d)$-matching in the bipartite graph 
$\mathcal{B}^{(n)\star}(u^{l+1}_n)=\mathcal{B}^{(n)}(u^{l+1}_n)\cap \Gamma^{(n)\star}$. 
It is obtained in such a way, that it satisfies the conditions of the perfect $(1,d)$-matchings for all vertices that are at the distance less than $\max\{4h(3d(n+1))+3,5\}$ from $u^{l+1}_n$.

\item The fan $(u^{\perp}_{n}, \{ v^{\perp}_{n,j} \, |1\leq j\leq d-1 \})$ usually appears as a part of a fan of the matching $\mathfrak{M}^2_{n}$. 
We warn the reader that it is possible that $u^{\perp}_{n}$ does not exist.   

\item We assume that all of these elements (i.e. $u_n, v^j_{n,i}, u^j_n , v^{\perp}_{j,i}, \ldots$ ) are natural numbers and are ordered according to the standard ordering of the natural numbers.
\end{itemize}

The main difference with the non-effective version of the construction given in \cite{HallControlled} is that instead of $(1,d)$-matchings $\mathfrak{M}^1_n$, $\mathfrak{M}^2_n$ introduced in the list above (when we produce $(1,d-1)$-fans to add to the constructed objects), the circumstances of \cite{HallControlled} allow us to  use perfect $(1,d)$-matchings of infinite graphs. 
So, the construction in that paper is simpler. 
In the present paper we need some additional effort (mainly in Sections \ref{lemmastechnic} and \ref{lemmas}) in order to show that under an appropriate choice of the radius, the finite matchings $\mathfrak{M}^1_n$ and $\mathfrak{M}^2_n$ work without spoiling $c.e.H.h.c.(d)$ in the resulting graphs.

Before describing the construction in detail, we show the single step of the algorithm in the picture attached below, see Figure 1. 
We inform the reader that detailed illustrations of all parts of the step will be given in the appropriate places.

\begin{figure}[ht]
  \centering
    \includegraphics[width=1\textwidth]{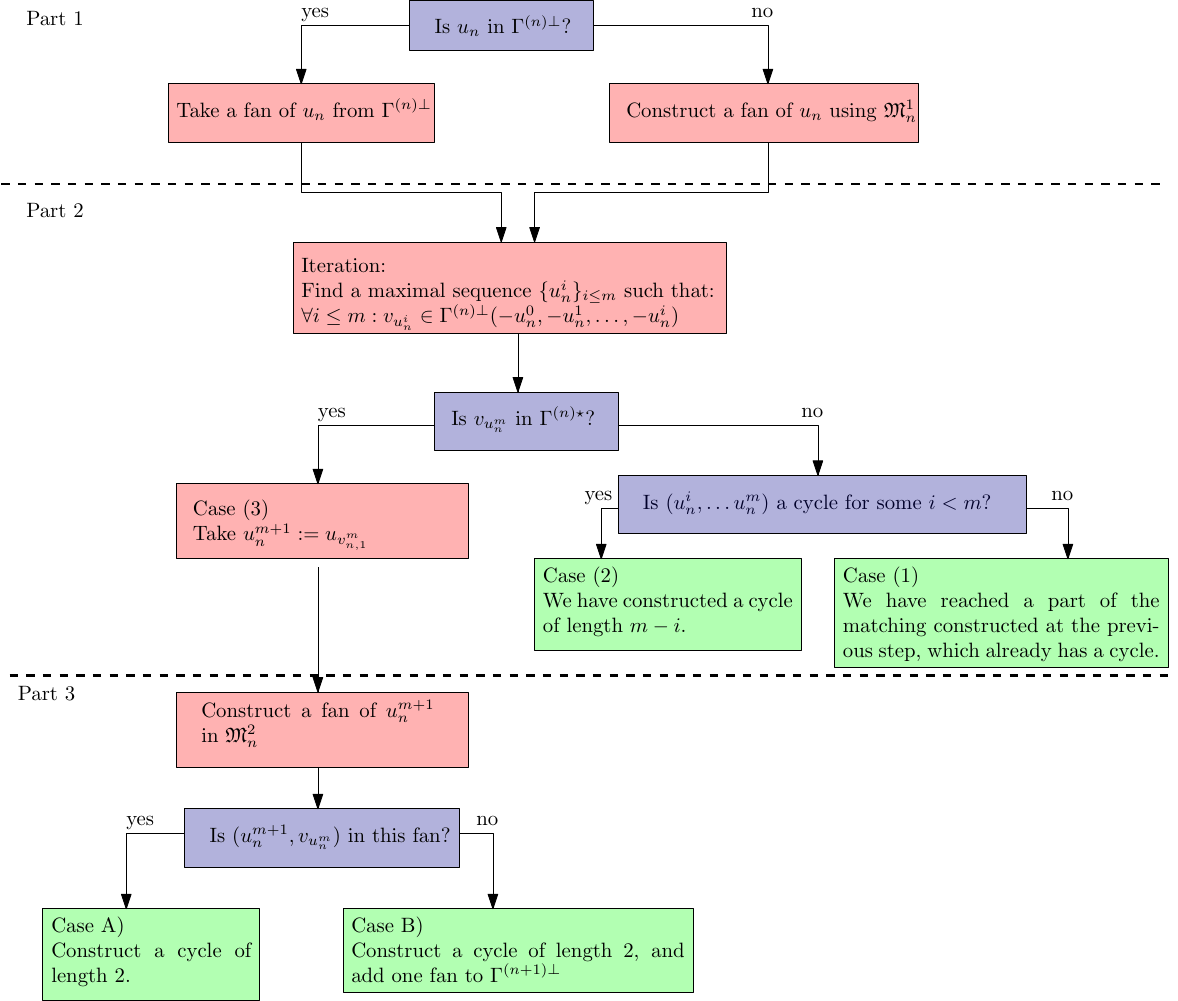}
    	\caption{Single step.}
\end{figure}

\section{Construction}\label{const}  

From now on in this section we fix a highly computable bipartite graph $\Gamma=(U,V,E)$ that satisfies $c.e.H.h.c.(d)$ and the following conditions:
\begin{itemize}
\item both $U$ and $V$ are identified with $\mathbb{N}\setminus\{0\}$;
\item $\Gamma$ is fully reflected;   
\item $E$ does not contain edges of the form $(v,u_v)$. 
\end{itemize} 
Before the construction of the proof of Theorem \ref{ehhc} we formulate an  easy proposition which will be often used below without any additional comments.  
It gives an algorithm for finite matchings $\mathfrak{M}^1_n$ and $\mathfrak{M}^2_n$ in $\mathcal{B}^{(n)\star}(u^{0}_n)$ and in $\mathcal{B}^{(n)\star}(u^{j}_n)$ respectively (see the notation of Section 4.2).

\begin{pr}\label{finmat}
Let $\Gamma'=(U',V',E')$ be a subgraph of $\Gamma = (U,V,E)$ which is also a highly computable bipartite graph that satisfies $c.e.H.h.c.(d)$. 
Then there is a uniform algorithm as follows. 

Having an input $u^* \in U'$ and an odd radius $r$ let $\mathcal B(u^*)$ and $\mathcal S(u^*)$ denote the corresponding ball and the sphere of the radius $r$ in $\Gamma$. 
Put $\mathcal{B}'(u^*) = \mathcal{B}(u^*)\cap \Gamma'$ 
\footnote{note that $\mathcal{B}'(u^*)$ does not have to be a ball in $\Gamma'$ of the center $u^*$}.

Then the algorithm finds a $(1,d)$-matching in $\mathcal{B}'(u^* )$, say $M_{u^*}$, which satisfies the conditions of perfect $(1,d)$-matchings for all $u\in U'\cap\mathcal{B}(u^*)$ and $v\in V'\cap(\mathcal{B}(u^*)\setminus \mathcal{S}(u^*))$. 
\end{pr}

From now on, such a matching  $M_{u^*}$ will be called a \textit{perfect $(1,d)$-matching in the ball} $\mathcal{B}'(u^* )$.

\begin{proof}
All elements of $V'\cap(\mathcal{B}(u^*)\setminus\mathcal{S}(u^*))$ have the same neighborhoods in $\Gamma'$ and $\mathcal{B}'(u^*)$.
By Theorem \ref{HC} there exists a perfect $(1,d)$-matching in $\Gamma'$. 
Thus there exists a matching in $\mathcal{B}'(u^*)$ which satisfies the conditions of a perfect $(1,d)$-matchings for all $u\in U'\cap\mathcal{B}(u^*)$ and $v\in V'\cap(\mathcal{B}(u^*)\setminus \mathcal{S}(u^*))$. 
Since $\Gamma'$ is highly computable and $\mathcal{B}'(u^*)$ is finite, we find this matching in a computable way. 

\end{proof} 
We now describe an inductive construction that is the heart of the proof of Theorem \ref{ehhc}.  
Every step consists of three parts. 
Some of them are finished by claims stating that certain graphs satisfy Hall's $d$-harem condition. 
These statements support further stages of the construction. 
We start with a detailed description of the first step of the construction.

\subsection{Step 1, part 1} 
We take $u_0$, the first element of the set $U$, i.e. $u_0=1$.
Let $\mathcal{B}^{(0)}(u_0)$ be the ball centered at $u_0$ and of the radius $\max\{4h(3d)+3,5\}$. 
Apply Proposition \ref{finmat} and compute a perfect $(1,d)$-matching $\mathfrak{M}^1_{0}$, in $\mathcal{B}^{(0)}(u_0)$. 
Let $v^0_{0,1},\ldots, v^0_{0,d}$ be elements of $V$, ordered by its numbers, such that $(u_0,v^{0}_{0,i})\in \mathfrak{M}^1_{0}$ for all $i\leq d$. 
Define the fan $M^0_0$ as the set of edges $(u_0,v_{0,i})$ for $i\leq d-1$. 
Let 
\[ 
\Gamma^{(0)}(-u_0):=(U\setminus\{u_0\},V\setminus\{v^0_{0,1},\ldots, v^0_{0,d-1}\}). 
\] 

\begin{figure}[ht]
  \centering
    \includegraphics[width=0.7\textwidth]{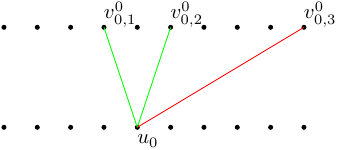}
	\caption{The first part of step 1, the $\mathfrak{M}^1_{0}$-fan of $u_0$ where $M^0_0$ is green.}
\end{figure}

\begin{clm}\label{c1p1ef}
The graph $\Gamma^{(0)}(-u_0)$ satisfies $c.e.H.h.c.(d)$.
\end{clm}
This claim follows from Lemma \ref{1st} which will be proved of in Section 7.

\subsection{Remarks before part 2}
Before the second part of step 1 let us discuss our local goals. 
Let $f_0$ be the partial function which  corresponds to $M_0$ and let $\Gamma^{(1)}$ be the graph obtained  after step 1, i.e. the part of $\Gamma$ after removal of $M_0$. 
We want to force that:
\begin{enumerate}
\item for all $n \in \mathsf{Dom}(f_0)$  there exists $i$ such that $f_0^i(n)$ is a periodic point, and
\item $\Gamma^{(1)}$ is $U^{(1)}$-reflected.
\end{enumerate}

It is clear that (1)-(2) are satisfied if we add the edge $(u_{v^0_{0,1}},v_{u_0})$ to the matching. 
This means that $f(v_{u_0}) = u_{v^0_{0,1}}$, i.e.  $f_0^2(u_0)= f_0 (f_0 (v_{u_0})) = u_0$.

\subsection{Step 1, part 2}

Denote $u^{1}_0:=u_{v^0_{0,1}}$. 
Note that $(u^1_0,v_{u_0})\in\Gamma^{(0)}(-u_0)$, because $(u_0,v^0_{0,1})$ is in $\Gamma^{(0)}$ and the latter one is $U^{(0)}$-reflected. 

\begin{rem}
For arbitrary $n$ part 2 of step $n$ depends on the subgraph $\Gamma^{(n-1)\perp}$. Since $\Gamma^{(0)\perp}$ is empty, at step 0 this part is reduced just to the choice of the vertex $u^{1}_0$. 
\end{rem}
In part 3 we will add the edge $(u^{1}_0, v_{u_0})$, to $M_0^1$. 

\subsection{Step 1, part 3} 

Let $\mathcal{B}^{(0)}(u^{1}_0)$ be the ball in $\Gamma^{(0)}(-u_0)$  with the center $u^{1}_0$ and the radius $\max\{4h(3d)+3,5\}$.
Since $\Gamma^{(0)}(-u_0)$ satisfies $c.e.H.h.c.(d)$, we can apply Proposition \ref{finmat} for $\Gamma^{(0)}(-u_0)$ (viewed as $\Gamma=\Gamma'$ in its formulation) and compute $\mathfrak{M}^2_{0}$, a perfect $(1,d)$-matching in $\mathcal{B}^{(0)}(u^1_0)$.
We remind the reader that $\dot{v}^1_{0,1},\ldots, \dot{v}^1_{1,d}$ are elements of $V$, ordered by their numbers, such that $(u^1_0,\dot{v}^1_{0,i})\in \mathfrak{M}^2_{0}$ for all $i\leq d$.

Since $v_{u_0}$ is the least number in $V$, there are two possible cases:

\begin{enumerate}

\item $v_{u_0}=\dot{v}^1_{0,1}$. We set $v^1_{0,i}:=\dot{v}^1_{0,i}$, $1\le i\le d-1$ (i.e. $v^1_{0,1}=v_{u_0}$). 

\item $v_{u_0}\neq \dot{v}^1_{0,1}$. 
Then find the fan $(u,v_i)\in \mathfrak{M}^2_{0}$, $1\leq i\leq d$, such that $v_{u_0}=v_1$ (assuming that the ordering of $v_i$ corresponds to their indexes). 
We set:
\begin{itemize}
\item $v^1_{0,1}:=v_{u_0}$ and $v^1_{0,i}:=\dot{v}^1_{0,i-1}$ for $2 < i \le d-1$,   
\end{itemize} 
and define a candidate for $\Gamma^{(1)\perp}$: 
\begin{itemize} 
\item $\dot{u}^{\perp}_0:=u$;
\item $\dot{v}^{\perp}_{0,i-1}:=v_i$ for $1<i\le d$.
\end{itemize}
\end{enumerate}

In either case define the fan $M^1_0$ as the set of edges $(u^1_0,v^1_{0,i})$ for $1\leq i\leq d-1$. 

\begin{figure}[ht]
  \centering
    \includegraphics[width=0.65\textwidth]{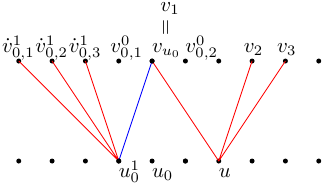}
    \caption{Step 1, part 3. $\mathfrak{M}^2_{0}$ is red and $v_{u_0}$ is matched with $u$. 
We want the edge $(u^{1}_0, v_{u_0})$ to be in $M_0$. 
Force the situation from Figure 4.}
\end{figure}

\begin{figure}[ht]
  \centering
    \includegraphics[width=0.65\textwidth]{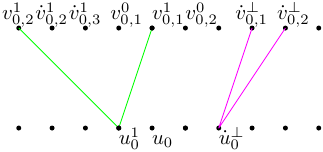}
    \caption{$M^1_{0}$ is green. It is possible that the purple fan consisting of edges $(\dot{u}^{\perp}_0,\dot{v}^{\perp}_{0,1}),(\dot{u}^{\perp}_0,\dot{v}^{\perp}_{0,2})$ will be added to $\Gamma^{(1)\perp}$.}
\end{figure}

Put $M_0=M^0_0\cup M^1_0$. 
We obtain $\Gamma^{(1)}$ by removal of $M_0$ from $\Gamma$. Since $u_0, v_{u_0}, v^0_{0,1}$ and $u^1_0=u_{v^0_{0,1}}$ have been removed, $\Gamma^{(1)}$ is $U^{(1)}$-reflected. 
It might turn out that it does not satisfy Hall's $d$-harem condition. 

Let $\Gamma'_0=(U^{(1)}\setminus \{\dot{u}_0^{\perp}\},V^{(1)}\setminus \{\dot{v}^{\perp}_{0,1},...,\dot{v}^{\perp}_{0,d-1}\})$. 
The statement below follows from Lemma \ref{2nd}.

\begin{clm}\label{c1p2ef}
At least one of $\Gamma'_{0}$ or $\Gamma^{(1)}$  
satisfies $c.e.H.h.c.(d)$.
\end{clm}

\subsection{The output of the first step}

If $\Gamma^{(1)}$ satisfies $c.e.H.h.c.(d)$, set
$\Gamma^{(1)\star}:=\Gamma^{(1)}$,  $U^{(1)\perp}=\emptyset$ and  $V^{(1)\perp}=\emptyset$.
If $\Gamma^{(1)}$ does not satisfy $c.e.H.h.c.(d)$, set
$\Gamma^{(1)\star}:=\Gamma'_{0}$ and 
\begin{itemize}
\item $u^{\perp}_0:=\dot{u}^{\perp}_0$;
\item $v^{\perp}_{0,i}:=\dot{v}^{\perp}_{0,i}$;
\item $U^{(1)\perp}=\{u^{\perp}_0\}$;
\item $V^{(1)\perp}=\{v^{\perp}_{0,i}: 1 \leq i\leq d-1\}.$
\end{itemize}

\subsection{The situation before step n+1}
We have constructed graphs $\Gamma^{(n)}$ and $\Gamma^{(n)\star}$, where $\Gamma^{(n)}$ is $U^{(n)}$-reflected and $\Gamma^{(n)\star}$ satisfies $c.e.H.h.c.(d)$. 
Since $|U^{(n)\perp}\setminus U^{(n-1)\perp}|\leq 1$, there are at most $n$ roots $u^{\perp}_i$ of fans in $U^{(n)\perp}$ (see Section 2.4 for the corresponding definition).

\subsection{Step n+1, part 1} 

Take $u_n$, the first element of the set $U^{(n)}$. 
In order to define $M^0_n$ we have two possible cases. 

\begin{enumerate} 
\item There is $j$ such that $u_n=u_j^{\perp}\in U^{(n)\perp}$. Then we set $M^0_n$ to consist of all edges of the form $(u^{\perp}_j,v^{\perp}_{j,i})$ and remove the fan with the root $u_j^{\perp}$ from $\Gamma^{(n)\perp}$. We redefine $U^{(n)\perp}$ and $V^{(n)\perp}$ accordingly (in particular $u_j^{\perp}$ is removed from $U^{(n)\perp}$). 

\item If $u_n\not\in U^{(n)\perp}$, then take the ball $\mathcal{B}^{(n)}(u_n)$ in $\Gamma^{(n)}$ with the center $u_n$ and the radius $\max\{4h(d(3n+1))+3,5\}$.  
Applying Proposition \ref{finmat} compute $\mathfrak{M}^1_n$, a perfect $(1,d)$-matching in the ball $\mathcal{B}^{(n)\star}(u_n):=\mathcal{B}^{(n)}(u_n)\cap\Gamma^{(n)\star}$. 
Let $v^0_{n,1},\ldots, v^0_{n,d}$ be elements of $V^{(n)\star}$, ordered by its numbers, such that $(u_n,v^{0}_{n,i})\in \mathfrak{M}^1_{n}$ for all $i\leq d$.
Verify whether there is $j$ with $(u_{v^0_{n,j}},v_{u_n})\in \Gamma^{(n)\perp}$. 
By the definition of $\Gamma^{(n)\perp}$ it can happen for at most one $j$.\footnote{Note here that it can also happen that $v_{u_n}$ is not even in $\Gamma^{(n)}$.}  
If there is such $j$ then it can be  equal to $d$ or not. 
In the first case, set $M^0_n$ to consist of edges $(u_n,v^0_{n,i})\in \mathfrak{M}^1_n$ for $i\neq d-1$. 
In all other possibilities, we set $M^0_n$ to consist of edges $(u_n,v^0_{n,i})\in \mathfrak{M}^1_n$ for $i\leq d-1$.
\end{enumerate}
\begin{rem} 
Note that as a result in the case of existence of $j$ as in (2) the edge $(u_n, v^0_{n,j})$ is included into $M^0_n$. 
Although the final part of the algorithm of (2) can be presented easier, i.e. as in Figure \ref{fig:part1},   
we do not want to miss this point. 
\end{rem}

\begin{figure}[ht]
  \centering
    \includegraphics[width=1\textwidth]{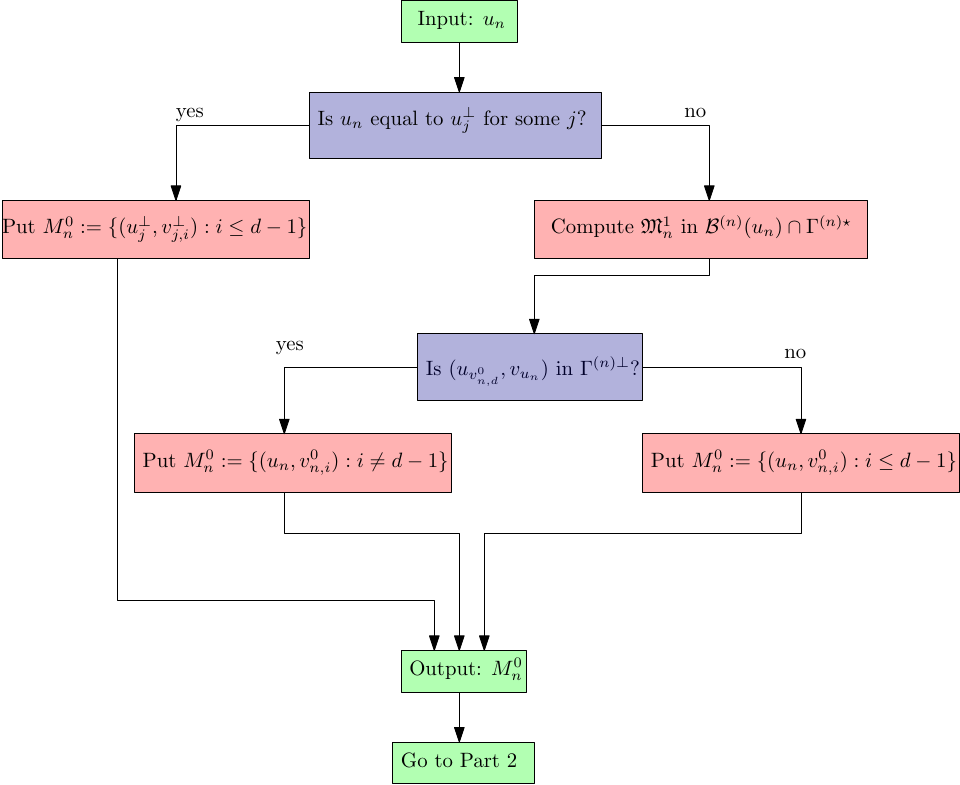}
\caption{Detailed version of the algorithm used at part 1 of step $n+1$.}\label{fig:part1}
\end{figure}

\pagebreak 

\begin{clm}\label{csnp1ef}
Let $\Gamma^{(n)\star}(-u_n)$ be $\Gamma^{(n)}(-u_n)\cap \Gamma^{(n)\star}$.
Then one of the following conditions holds:
\begin{itemize}
\item $\Gamma^{(n)\star}(-u_n)$ satisfies $c.e.H.h.c.(d)$;
\item there is $u_j^{\perp}\in U^{(n)\perp}$ such that the graph $\Gamma^{(n)\star}(-u_n,+u_j^{\perp})$ satisfies $c.e.H.h.c.(d)$.
\end{itemize}
There is an algorithm which verifies these conditions and finds $u^{\perp}_j$ in the second one. 
\end{clm} 
In case (1),  $\Gamma^{(n)\star}(-u_n) =  \Gamma^{(n)\star}$ and the claim is obvious. In case (2) the claim follows from Lemma \ref{1st} below.

\subsection{The output of part 1}  
This is $M^0_n$. 
We also update our graphs in the following situation.  
If $\Gamma^{(n)\star}(-u_n)$ does not satisfy $c.e.H.h.c.(d)$, let $u_j^{\perp}$ be an element from $U^{(n)\perp}$ realizing the second possibility of Claim \ref{csnp1ef}. 
We remove the fan of $u^{\perp}_j$ with its leaves 
from $(U^{(n)\perp},V^{(n)\perp})$ and then we put it into $\Gamma^{(n)\star}$.
Thus the latter graph (and $U^{(n)\perp},V^{(n)\perp}$) are updated.
It is clear that now the redefined $\Gamma^{(n)\star}(-u_n)$ satisfies $c.e.H.h.c.(d)$.

\subsection{Remarks before part 2} \label{Rbp2} 
Before the rest of step $n+1$, we describe the goals which we want to achieve after the step: 

\begin{enumerate}
\item  the partial function $f_n$ corresponding to $\bigcup\limits_{i=0}^{n} M_i$ has controlled sizes of its cycles, and
\item the graph $\Gamma^{(n+1)}$ obtained at the end of the step, is $U^{(n+1)}$-reflected, and the corresponding graph $\Gamma^{(n+1)\star}$ satisfies Hall's $d$-harem condition.
\end{enumerate}
In order to achieve the first condition we will organize one of the the following properties:
\begin{enumerate}[(i)]
\item there is a sequence of vertices $u^0_n,u^1_n, u^2_n,\ldots, u^j_n$, $1\leq j\leq n$, such that every edge $(u^i_n,v_{u^{i-1}_n})$ belongs to $M_n$, and for some $0\leq \ell \leq j-1$ the sequence $(u^{\ell}_n, u^{\ell +1}_n,\ldots, u^j_n)$ is a cycle;
\item there is a sequence of vertices $u^0_n, u^1_n, u^2_n,\ldots, u^j_n$, $j\leq n$, such that each edge $(u^i_n,v_{u^{i-1}_n})$ belongs to $M_n$, and $v_{u^j_n}$ is already adjacent to some edge from $\bigcup\limits_{i=0}^{n-1} M_i$;
\end{enumerate}

\subsection{Step n+1, part 2} \label{nplus1p2}

We begin by checking whether $v_{u_n}$ belongs to $\Gamma^{(n)\perp}$.  
If $v_{u_n}\in \Gamma^{(n)\perp}$ then we denote $u_n$ by $u^0_n$ and begin the following process of choosing the consecutive vertices $u^i_n$. 
\begin{quotation}
\textit{First step of iteration.} 
Assume that for some $j_0,i$ we have $v_{u_n}=v^{\perp}_{j_0,i}\in V^{(n)\perp}$.
Then we set 

$u^1_n := u^{\perp}_{j_0}$, $v^1_{n,k}:=v^{\perp}_{j_0,k}$, $k\le d-1$, 

$M^1_n:=\{(u^1_n,v^1_{n,k}): 1 \le k \le d-1 
\}$. \\ 
and check whether $v_{u^1_n}\in \Gamma^{(n)\perp}(-u^0_n)$. \\ 
If it is so we repeat the iteration for $v_{u^1_n}$. 
Note that $v_{u^1_n} \in \Gamma^{(n)\perp}(-u^0_n,-u^1_n )$
then. 
\end{quotation} 
\begin{quotation}
\textit{Single step of iteration.} 
We verify if 
$v_{u^m_n}\in \Gamma^{(n)\perp}(-u^0_n,-u^1_n,\ldots, -u^{m}_n)$. 
If it is so then for some $j_{m},i$ we have $v_{u^m_n}=v^{\perp}_{j_{m},i}\in V^{(n)\perp}$. 
Define 

$u^{m+1}_n := u^{\perp}_{j_m}$,  $v^{m+1}_{n,k}:=v^{\perp}_{j_m,k}$, $1\le k \le d-1$, 

$M^m_n:=\{(u^{m+1}_n,v^{m+1}_{n,k}): 1 \le k \le d-1 
\}$, 

and we repeat the iteration for $v_{u^{m+1}_n}$.
This ends the single iteration step.
\end{quotation}

Since $|U^{(n)\perp}|\leq n$, the procedure ends after at most $n$ iterations. 
Therefore one of the following cases is realized for some $l\leq n$: 

\begin{enumerate}
\item $v_{u^l_n}\notin \Gamma^{(n)}$;
\item $v_{u^l_n}\in \Gamma^{(n)\perp}$, but $v_{u^l_n}\notin \Gamma^{(n)}(-u^0_n,-u^1_n,\ldots, -u^{l}_n)$ (this case is impossible for $l=0$);
\item $v_{u^l_n} \in \Gamma^{(n)\star}$ (and obviously $v_{u^l_n}\in \Gamma^{(n)}(-u^0_n,-u^1_n,\ldots, -u^{l}_n)$).
\end{enumerate}

In case (1), $v_{u^l_n}$ was already added to $M$ at preceding steps and condition (ii) of Remarks at point  \ref{Rbp2} is satisfied.
We finish part 2 of step $n+1$ without a new cycle. 

In case (2), the last iteration closes  the cycle $(u^k_n,\ldots, u^l_n)$. 
The length of this cycle is not greater than $l+1$.

In each of these two cases we skip part 3 of the step and go to the output of step $n+1$, see \ref{otpt}.
We set $M_n=\bigcup\limits_{k=0}^{l} M^k_n$ and obtain the graph $\Gamma^{(n+1)}$ from $\Gamma^{(n)}$ by removal of $M_n$-fans. 
Since for every $u \in U^{(n)}\setminus U^{(n+1)}$ the element $v_u$ is either removed as well or was not in $V^{(n)}$ from the beginning, then the graph is $U^{(n+1)}$-reflected. 

Case (3) is the most complicated one; part 3 of the step will be entirely dedicated to it. 
In this case, we finish part 2 of the step by taking a new term $u^{l+1}_n := u_{v^l_{n,1}}$. 
In part 3 we will force the cycle $(u^{l}_n,u^{l+1}_n)$ of length $2$.

\bigskip 

Figure \ref{fig:part2} illustrates the algorithm of the second part of the step.

\begin{figure}[ht]
  \centering
    \includegraphics[width=1\textwidth]{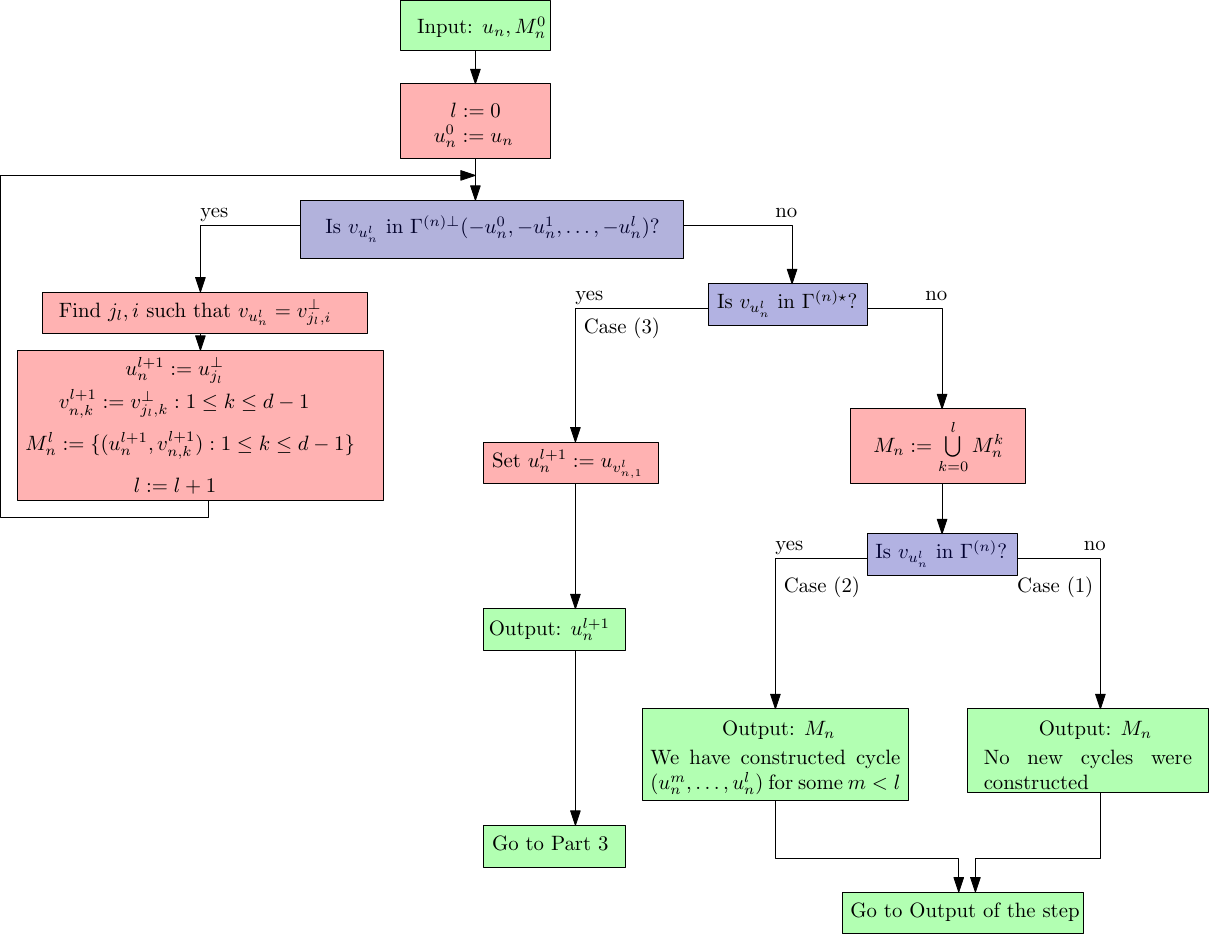}
    \caption{Detailed version of the algorithm of part 2 of step $n+1$.}\label{fig:part2}
\end{figure}

\pagebreak 

\subsection{Example}  \label{cycllemma}

Before we move to the third part of the step, we give an example of a cycle obtained by the algorithm in case (2) of this part of the step. 

The following pictures show how a cycle of length $3$ arises in this procedure when the graph satisfies Hall's $3$-harem condition.

\begin{figure}[ht]
  \centering
    \includegraphics[width=0.8\textwidth]{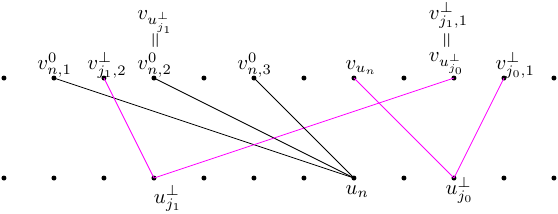}
    \caption{$\Gamma^{(n)\star}$ is black, $\Gamma^{(n)\perp}$ is purple.}
\end{figure}

\begin{figure}[ht]
  \centering
    \includegraphics[width=0.8\textwidth]{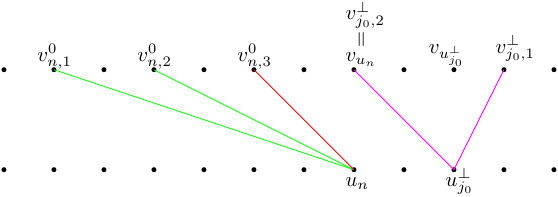}
    \caption{$\mathfrak{M}^n_1$ is red and green, $M_n^0$ is green. We have $v_{u_n}=v^{\perp}_{j_0,2}$.}
\end{figure}

\begin{figure}[ht]
  \centering
    \includegraphics[width=0.8\textwidth]{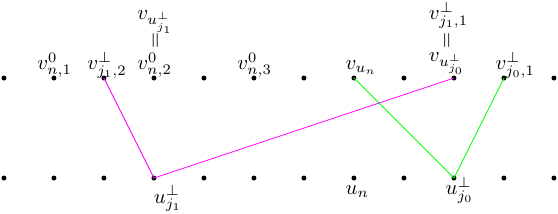}
    \caption{$M_n^1$ is green. We have $v_{u^{\perp}_{j_1}}=v^{\perp}_{j_2,2}$. Moreover $v_{u_{j_1}^{\perp}}=v^0_{n,2}$.}
\end{figure}

To demonstrate the cycle in a single picture, we take 4 copies of $\mathbb{N}$ as rows. 
The first and the second ones correspond to  $V$ and $U$ after part 1 of the step. 
The second and the third rows correspond to the sets $V$ and $U$ after the first iteration step. 
The third and the fourth rows correspond to $U$ and $V$ after the second step of the iteration.

\begin{figure}[ht]
  \centering
    \includegraphics[width=0.8\textwidth]{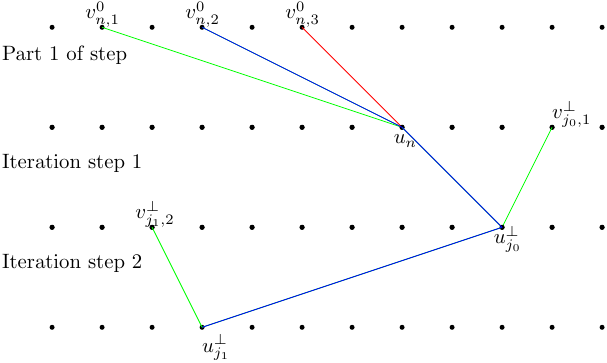}
    \caption{The final $M_n$ with the obtained cycle of length $3$ marked in blue. The red edge is the edge from $\mathfrak{M}^n_1$ that was not added to $M_n^0$.}
\end{figure}

\subsection{Step n+1, part 3}
Let  $\mathcal{B}^{(n)}(u^{l+1}_n)$ 
be the ball in $\Gamma^{(n)}(-u_n,-u^1_n,\ldots,-u^l_n)$ with the center $u^{l+1}_n$ and the radius $\max\{4h(3d(n+1))+3,5\}$. 
Let $\mathcal{B}^{(n)\star}(u^{l+1}_n):=\mathcal{B}^{(n)}(u^{l+1}_n)\cap\Gamma^{(n)\star}$.

Case (3) of part 2 guarantees that the edge $(u^l_n,v^l_{n,1})$ is in $\Gamma^{(n)}$ and $v_{u^l_n} \in \Gamma^{(n)}(-u_n,-u^1_n,\ldots,-u^l_n)$.  
Thus applying $U^{(n)}$-reflectedness of $\Gamma^{(n)}$ we see $(u^{l+1}_n,v_{u^l_n})\in\Gamma^{(n)}(-u_n,-u^1_n,\ldots,-u^l_n)$. 
Observe that since in part 2 the elements $u^{\perp}_{j_1},\ldots, u^{\perp}_{j_l}$ were chosen in $U^{(n)\perp}$, 
$$
\Gamma^{(n)}(-u_n,-u^1_n,\ldots,-u^l_n) \cap\Gamma^{(n)\star}=\Gamma^{(n)}(-u_n)\cap\Gamma^{(n)\star}=\Gamma^{(n)\star}(-u_n).
$$  

By part 1 of this step the graph $\Gamma^{(n)\star}(-u_n)$ satisfies $c.e.H.h.c.(d)$, thus we can apply Proposition \ref{finmat} for $\Gamma^{(n)\star}(-u_n)$ (viewed as $\Gamma'$ in the formulation, with $\Gamma^{(n)}(-u_n,-u^1_n,\ldots,-u^l_n)$ viewed as $\Gamma$) and compute $\mathfrak{M}^2_n$, a perfect $(1,d)$-matching in the ball $\mathcal{B}^{(n)\star}(u^{l+1}_n)$.

\begin{quotation} 
We now check whether there is $j$ with $u^{l+1}_n=u^{\perp}_j$. 
If it exists, we set 
$\dot{v}^{l+1}_{n,i}:=v^{\perp}_{j,i}$ for $1\leq i\leq d-1$.
When it does not exist, $\dot{v}^{l+1}_{n,1},\ldots,  \dot{v}^{l+1}_{n,d}$ will 
denote the elements adjacent to $u^{l+1}_n$ under $\mathfrak{M}^2_n$. 

There are two cases:

\begin{enumerate}[label={\Alph*)}]
\item $(u^{l+1}_n,v_{u^l_n})\in \mathfrak{M}^2_n$, i.e. $v_{u^l_n}=\dot{v}^{l+1}_{n,k}$ for some $1\leq k \leq d$. In this case $u^{l+1}_n$ cannot be $u^{\perp}_j$ for any $j$.\label{caseA1}

\item $(u^{l+1}_n,v_{u^l_n})\notin \mathfrak{M}^2_n$, i.e. there exists some $u\in \Gamma^{(n)\star}(-u_n)$, such that $v_{u^l_n}=v_k$ for some $1\leq k \leq d$, where $v_1,\ldots, v_d$ denote the elements adjacent to $u$ under $\mathfrak{M}^2_n$. In this case it is possible that $u^{l+1}_n$ coincides with some $u^{\perp}_j$. \label{caseB1}
\end{enumerate}

In either case we produce a cycle of length 2 by including the pair $(u^{l+1}_n,v_{u^{l}_n})$ into $M^{l+1}_n$. 
In fact we include it into $M^{l+1}_n$ together with a fan with the root $u^{l+1}_n$ and $(d-2)$ leaves taken among $\dot{v}^{l+1}_{n,i}$. 

To be precise, we organize it as follows. 

In case \ref{caseA1} 
if $k=d$, we set $v^{l+1}_{n,i-1}:=\dot{v}^{l+1}_{n,i}$ for $2\leq i\leq d$.
If $k\neq d$ we set $v^{l+1}_{n,i}:=\dot{v}^{l+1}_{n,i}$ for $1\leq i\leq d-1$. 
The set $M^{l+1}_n$ consists of edges $(u^{l+1}_n,v^{l+1}_{n,i})$ for $1\leq i\leq d-1$. 
The procedure is finished.

In case \ref{caseB1} $v^{l+1}_{n,1}=v_{u^{l}_n}$ and $v^{l+1}_{n,i+1}=\dot{v}^{l+1}_{n,i}$, $1 \le i \le d-2$.
The set $M^{l+1}_n$ consists of edges $(u^{l+1}_n,v^{l+1}_{n,i})$ for $1\leq i\leq d-1$. 
We define $\dot{u}_n^{\perp}:=u$ and rename the remaining $d-1$ vertices $v_j$ to $\dot{v}^{\perp}_{n,i}$.
The procedure is finished.

\vspace{0.5cm}
\end{quotation}

Figure \ref{fig:part3} illustrates part 3 of the algorithm. 

\begin{figure}[ht]  \centering
    \includegraphics[width=1\textwidth]{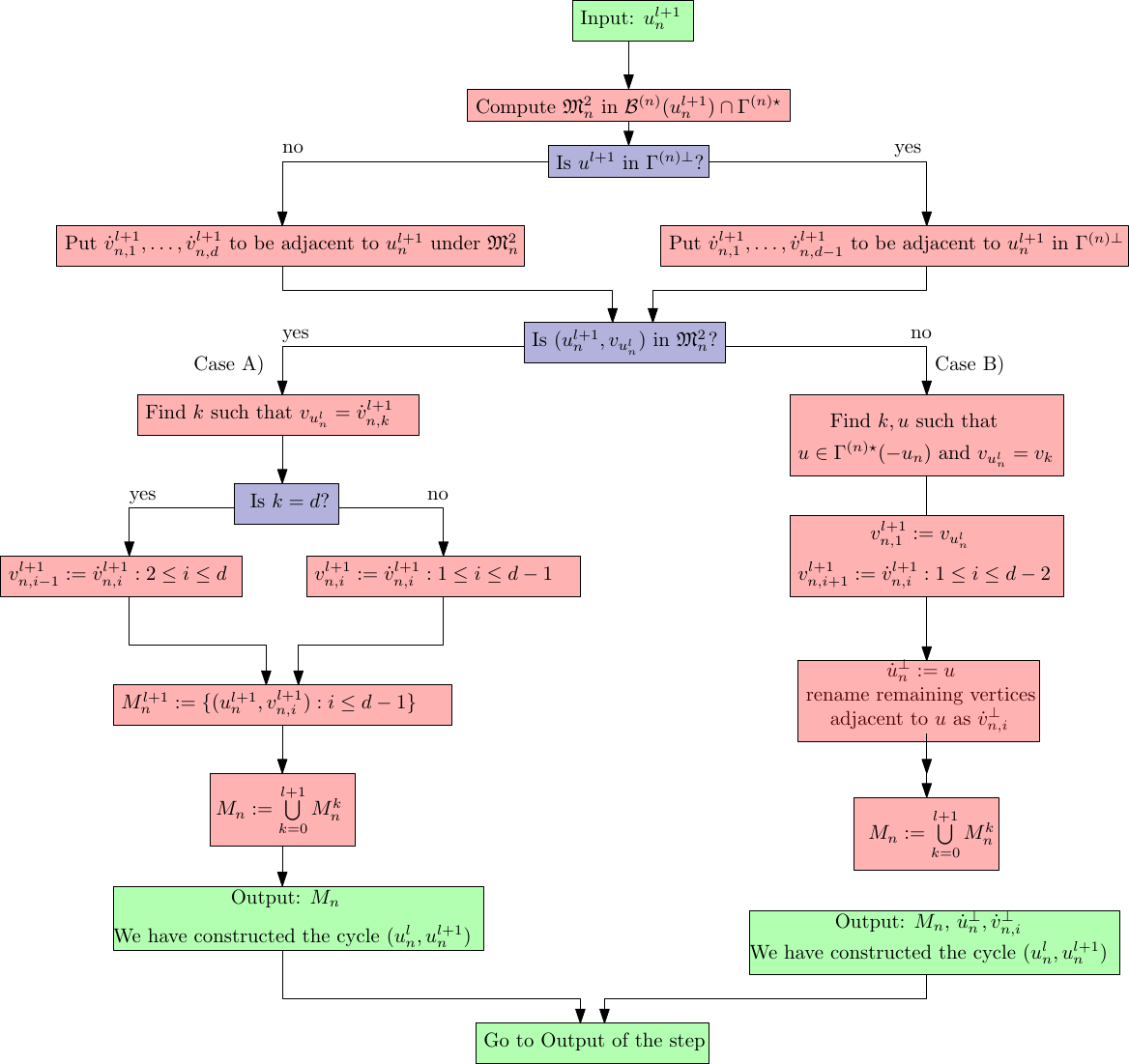}
    \caption{Detailed version of the algorithm used at the part 3 of the step $n+1$.}\label{fig:part3}

\end{figure}

\pagebreak 

Let $M_n=\bigcup\limits_{k=0}^{l+1} M^k_n$.
We obtain the graph $\Gamma^{(n+1)}$ from $\Gamma^{(n)}$ by removal of $M_n$-fans.
Since for each $u \in U^{(n)}\setminus U^{(n+1)}$ the element $v_u$ is also removed, then the graph is $U^{(n+1)}$-reflected. 

\subsection{Output of the step} \label{otpt} 

We have already defined $M_n$ and $\Gamma^{(n+1)}$. 
It remains to produce graphs $\Gamma^{(n+1)\star},\Gamma^{(n+1)\perp}$. 
For this purpose, we define auxiliary graphs 
$$ 
\mathfrak{T}=(\mathfrak{U},\mathfrak{V}) \mbox{ and }\dot{\Gamma}^{(n)\perp} = (\dot{U}^{(n)\perp},\dot{V}^{(n)\perp}).
$$
In cases (1), (2) and (3)A) of parts 2 and 3 we set $\mathfrak{U}:=U^{(n+1)}\setminus U^{(n)\perp}$,  $\mathfrak{V}:=V^{(n+1)}\setminus V^{(n)\perp}$, and $\dot{\Gamma}^{(n)\perp}:=\Gamma^{(n)\perp}\cap \Gamma^{(n+1)}$.
Note here that in cases (1) and (2), in part 2 of the step, only elements of $\Gamma^{(n)\perp}$ were added to $M_n$. 
Therefore, in these cases, $\mathfrak{T}$ coincides with $\Gamma^{(n)\star}(-u_n)$ and satisfies Hall's $d$-harem condition.

In case (3)B) we set 
\begin{itemize} 
\item $\mathfrak{U}:=U^{(n+1)}\setminus (U^{(n)\perp}\cup \{\dot{u}_n^{\perp}\})$, 
\item $\mathfrak{V}:=V^{(n+1)}\setminus (V^{(n)\perp} \cup \{\dot{v}_{n,i}^{\perp}: 1\leq i\leq d-1\})$,  
\end{itemize} 
unless $u^{l+1}_n$ coincides with some $u^{\perp}_j$. 
In the latter case, $\dot{v}^{l+1}_{n,d-1}$ is the only vertex left from the fan of $u^{\perp}_j$ in $\Gamma^{(n)\perp}$. 
\footnote{Furthermore, the vertex $\dot{v}^{l+1}_{n,d}$ does not exist, i.e the only vertex from the fan of $u^{l+1}_n$ that is outside of $M^{l+1}_n$ is $\dot{v}^{l+1}_{n,d-1}$.}
Consequently, we define 

$\bullet \, \mathfrak{V}:= (V^{(n+1)}\cup \{ \dot{v}^{l+1}_{n,d-1}\})\setminus (V^{(n)\perp} \cup \{\dot{v}_{n,i}^{\perp}: 1\leq i\leq d-1\})$.

We define $\dot{U}^{(n)\perp}$ and  $\dot{V}^{(n)\perp}$ accordingly: 
\begin{itemize} 
\item $\dot{U}^{(n)\perp}:=(U^{(n)\perp}\cup \{\dot{u}_n^{\perp}\})\cap U^{(n+1)}$, 
\item  $\dot{V}^{(n)\perp}:=(V^{(n)\perp}\cup \{\dot{v}_{n,k}^{\perp}:1\leq k \leq d-1\})\cap V^{(n+1)}$,  
\end{itemize} 
The following claim follows from Lemma \ref{2nd} below.

\begin{clm}\label{cs1p2ef}
At least one of the following holds:
\begin{itemize}
\item $\mathfrak{T}$ satisfies  $c.e.H.h.c.(d)$;
\item there exists some vertex $u_j^{\perp}\in \dot{U}^{(n)\perp}$ such that the graph $\mathfrak{T}(+u_j^{\perp})$ satisfies $c.e.H.h.c.(d)$.
\item there exist vertices $u_i^{\perp},u_j^{\perp}\in \dot{U}^{(n)\perp}$ such that the graph $\mathfrak{T}(+u_i^{\perp},+u_j^{\perp})$ satisfies  $c.e.H.h.c.(d)$.
\end{itemize}
\end{clm}

Now, depending on the output of the claim above we define graphs $\Gamma^{(n+1)\star},\Gamma^{(n+1)\perp}$.

In case (3)B) (i.e. $\dot{u}_n^{\perp}$ exists) if  $u_i^{\perp}\neq \dot{u}_n^{\perp}\neq u_j^{\perp}$ (for any output of the claim), we set $u_n^{\perp}:=\dot{u}_n^{\perp}$ and $v_{n,k}^{\perp}:= \dot{v}_{n,k}^{\perp}$ for $1\leq k\leq d-1$. 
Otherwise, or in the remaining cases $u_n^{\perp},v_{n,k}^{\perp}$ are not defined.

In the first case of the claim we set $\Gamma^{(n+1)\star}:=\mathfrak{T}$ and \begin{itemize} 
\item $U^{(n+1)\perp}:= (U^{(n)\perp}\cup\{u_n^{\perp}\})\cap U^{(n+1)},$
\item $V^{(n+1)\perp}:=((V^{(n)\perp}\setminus\{\dot{v}^{l+1}_{n,d-1}\})\cup \{v_{n,k}^{\perp}: 1\leq k\leq d-1\})\cap V^{(n+1)}.$
\end{itemize} 
In the second case we set $\Gamma^{(n+1)\star}:=\mathfrak{T}(+u_j^{\perp})$ and 
\begin{itemize} 
\item $U^{(n+1)\perp}:= ((U^{(n)\perp}\setminus\{u_j^{\perp}\})\cup \{u_n^{\perp}\})\cap U^{(n+1)},$
\item $V^{(n+1)\perp}:=(((V^{(n)\perp}\setminus\{\dot{v}^{l+1}_{n,d-1}\})\setminus \{v_{j,k}^{\perp}: 1\leq k\leq d-1\})\cup \{v_{n,k}^{\perp}: 1\leq k\leq d-1\})\cap V^{(n+1)}.$
\end{itemize} 
In the third case we set $\Gamma^{(n+1)\star}:=\mathfrak{T}(+u_i^{\perp},+u_j^{\perp})$ and 
\begin{itemize} 
\item $U^{(n+1)\perp}:= ((U^{(n)\perp}\setminus \{u_i^{\perp},u_j^{\perp}\})\cup\{u_n^{\perp}\})\cap U^{(n+1)},$
\item $V^{(n+1)\perp}:=(((V^{(n)\perp}\setminus\{\dot{v}^{l+1}_{n,d-1}\})\setminus \{v_{i,k}^{\perp},v_{j,k}^{\perp}: 1\leq k\leq d-1\})\cup \{v_{n,k}^{\perp}: 1\leq k\leq d-1\})\cap V^{(n+1)}.$ 
\end{itemize}

\section{General Lemmas}\label{lemmastechnic}

The construction presented in Section 5 is supported by the lemmas of Section 7. 
In order to prove them, we need some additional observations of slightly more general character. 
This is the purpose of this section. 
We warn the reader that the notation used here does not coincide with that of Section 5. 
\begin{itemize} 
\item Throughout this section $\Gamma=(U,V,E)$ denotes a bipartite graph  \\ 
and $\Gamma^{\perp}=(U^{\perp},V^{\perp},E^{\perp})$ denotes its subgraph. \\ 
The graph $\Gamma^{\star}=(U^{\star},V^{\star},E^{\star})$ is an induced subgraph of $\Gamma$ such that 
$$
U^{\star}\cap U^{\perp}=\emptyset = V^{\star}\cap V^{\perp}.  
$$ 
\item Below we always assume that $d$ is a natural number greater than $1$. 
\end{itemize}  
The following situation will arise several times in our arguments in Section \ref{lemmas}. 
Let $X$ be a subset of $V$ such that  
$$|N_{\Gamma}(X)|\geq \frac{1}{d}|X|,$$
but
$$
|N_{\Gamma}(X) \setminus U^{\perp}|<\frac{1}{d}|X|.
$$
Thus we can conclude that $N_{\Gamma}(X)\cap U^{\perp}\neq \emptyset$.

The following lemma describes typical circumstances which lead to this situation.

\begin{lm}\label{neighsize}
Let $\Gamma=(U,V,E)$ be $U$-reflected and  
let $\Gamma^{\star}$ be a subgraph of $\Gamma$ induced by the sets of vertices $U^{\star}=U\setminus U^{\perp}$, $V^{\star}=V\setminus V^{\perp}$.
Assume that $\Gamma^{\star}$ satisfies Hall's $d$-harem condition, and assume that for each $Y\subset U^{\perp}$ we have 
$|N_{\Gamma}(Y) \cap V^{\perp}|\geq (d-1)|Y|$.

Then for any $X\subseteq V$ we have $|N_{\Gamma}(X)|\geq (d-1)|X|$.
In particular $|N_{\Gamma}(X)|\geq \frac{1}{d}|X|$
\end{lm}
\begin{proof}
Let $U_X:=\{u\in U: v_u\in X\}$.
Since $\Gamma$ is $U$-reflected, $|U_X|=|X|$.
Consider the sets 
$$U_X^{\star}:=U_X\setminus U^{\perp}$$
and 
$$U_X^{\perp}:=U_X\cap U^{\perp}.$$  
Using $U$-reflectedness of $\Gamma$ again, we see  
$$
|N_{\Gamma}(X)|\geq |N_{\Gamma^{\star}}(U_X^{\star})|+|N_{\Gamma}(U^{\perp}_X)\cap V^{\perp}|.$$
Since Hall's $d$-harem condition is satisfied for any subset of $U^{\star}$, 
$$
|N_{\Gamma^{\star}}(U_X^{\star})|\geq d|U_X^{\star}|.
$$
Moreover we have $|N_{\Gamma}(U_X^{\perp})\cap V^{\perp}|\geq (d-1)|U_X^{\perp}|$.
Therefore
$$|N_{\Gamma^{\star}}(U_X^{\star})|+|N_{\Gamma}(U^{\perp}_X)\cap V^{\perp}|\geq (d-1)|U_X|.$$
Since $d\geq 2$, it follows that 
$$|N_{\Gamma}(X)|\geq (d-1)|X|\geq \frac{1}{d}|X|.$$

\end{proof}

The conditions of this lemma do not fit to the situation of Part 1 of the step of the main construction. 
Indeed, when some vertices from the graph are removed, we cannot guarantee that it is still $U$-reflected. 
On the other hand, it can happen that for some $v\in V$, the subgraph $\Gamma(-v)$ is $U$-reflected. 
This is the reason why in the following lemmas both $\Gamma^{\perp}$ and $\Gamma^{\star}$ are  subgraphs of some $\Gamma(-v)$.

\begin{lm}\label{neighsize2}
Let $v$ be a vertex from $V$ and 
$\Gamma(-v)=(U,V(-v))$. 
Assume that $\Gamma(-v)$ is $U$-reflected.
Let $\Gamma^{\perp}=(U^{\perp},V^{\perp},E^{\perp})$ be a subgraph of $\Gamma(-v)$ and let $\Gamma^{\star}$ be defined as the subgraph of $\Gamma(-v)$ induced by the sets of vertices 
$U^{\star}:=U\setminus U^{\perp},V^{\star}:=V(-v)\setminus V^{\perp}$.

Assume that $\Gamma^{\star}$ satisfies Hall's $d$-harem condition, and assume that for each $Y\subset U^{\perp}$ we have 
$|N_{\Gamma}(Y)\cap V^{\perp}|\geq (d-1)|Y|$.

Then for any $X \subseteq V\setminus\{v\}$ the inequality 
$|N_{\Gamma}(X)|\geq (d-1)|X|-1$ holds. \\ 
Moreover, the equality $|N_{\Gamma}(X)|=(d-1)|X|-1$ can happen only if $v\in N_{\Gamma}(U_X)$ and 
$N_{\Gamma^{\star}}(U_X)=\emptyset$, 
where $U_X =\{u\in U: v_u\in X\}$.  

\end{lm}
\begin{proof}
By $U$-reflectedness of $\Gamma(-v)$ we have $|U_X|=|X|$ and 
$$
|N_{\Gamma}(X)|+1\geq|N_{\Gamma(-v)}(U_X)|.
$$
Furthermore, if $v\notin N_{\Gamma}(U_X)$, then 
$$
|N_{\Gamma}(X)|\geq|N_{\Gamma(-v)}(U_X)|.
$$ 
Consider the sets 
$$
U_X^{\star}:=U_X\setminus U^{\perp} \, \mbox{ and } \, 
U_X^{\perp}:=U_X\cap U^{\perp}.
$$
Applying the argument of Lemma \ref{neighsize} to $\Gamma (-v)$ we obtain 
$$
|N_{\Gamma(-v)}(U_X)| \geq |N_{\Gamma^{\star}}(U_X^{\star})|+|N_{\Gamma}(U^{\perp}_X)\cap V^{\perp}| \geq (d-1)|U_X|.
$$
Thus 
$|N_{\Gamma}(X)|\geq |N_{\Gamma(-v)}(U_X)|-1\geq (d-1)|X|-1$.

Observe that $|N_{\Gamma(-v)}(U_X)|=(d-1)|U_X|$ only if $N_{\Gamma(-v)}(U_X)\subseteq V^{\perp}$, i.e. $N_{\Gamma^{\star}}(U_X)=\emptyset$ (since Hall's $d$-harem condition is satisfied for any subset of $U^{\star}$). 
If $v\notin N_{\Gamma}(U_X)$, then 
$$|N_{\Gamma}(X)|\geq |N_{\Gamma(-v)}(U_X)| \geq (d-1)|X|.$$
Therefore $|N_{\Gamma}(X)|=(d-1)|X|-1$ only if $v\in N_{\Gamma}(U_X)$ and $N_{\Gamma^{\star}}(U_X)=\emptyset$.

\end{proof}

The following proposition will be used regularly in the proofs of Lemmas \ref{1st} and \ref{2nd}.

\begin{pr}[Removing at most $k$ fans from a ball]\label{hallincom}
Assume that $\Gamma=(U,V,E)$ is a highly computable bipartite graph satisfying $c.e.H.h.c.(d)$ with witness $h$.
Let: 
\begin{itemize} 
\item $u\in U$ and $\mathcal{B}(u)$ be the ball in $\Gamma$ with the center $u$ and the radius of at least \\ 
$\max\{2h(kd)+3,5\}$; 
\item  
$M_u$ be a $(1,d)$-matching in $\mathcal{B}(u)$
which satisfies the conditions of the perfect $(1,d)$-matchings for all vertices that are at the distance less than $\max\{2h(kd)+3,5\}$ from $u$; 
\item $U_1\subseteq U$ be a subset of the ball of $u$ of radius $\le 2$, 
$|U_1 |\le k$, and $V_1$ be the set of vertices adjacent in $M_u$ to vertices from $U_1$.  
\end{itemize} 
Then $\Gamma':=(U\setminus U_1,V\setminus V_1)$ satisfies Hall's $d$-harem condition.
\end{pr} 

\begin{proof}
Let us show that the inequality $|N_{\Gamma'}(X)|-d|X|\geq 0$ holds for any $X\subset U\setminus U_1$. 
If $X$ is not connected (in the sense of the Definition \ref{connected}), then its neighborhood is a disjoint union of the neighborhoods of the connected components of $X$. 
Thus the inequality for each component implies it for $X$. 
Therefore, without loss of generality we may also assume that $X$ is connected. 
Further, we may assume that $X\nsubseteq U\cap\mathcal{B}(u)$ and $V_1\cap N_{\Gamma}(X)\neq\emptyset$.
Indeed, if $V_1\cap N_{\Gamma}(X)=\emptyset$ then $N_{\Gamma'}(X)=N_{\Gamma}(X)$, i.e. $|N_{\Gamma'}(X)|\geq d|X|$.
On the other hand, if $X\subset U\cap\mathcal{B}(u)$ then using 
existence of an $(1,d)$-matching $M_u$ from $U\cap\mathcal{B}(u)$ to $V\cap\mathcal{B}(u)$ which is perfect for subsets of $U\cap\mathcal{B}(u)$, we see that $|N_{\Gamma'}(X)|-d|X|\geq 0
$. 

Let us choose $u_1,u_2\in X$ such that  $u_1\in N_{\Gamma}(V_1)$ and $u_2\in X\setminus \mathcal{B}(u)$. 
By the choice of the radius of the ball $\mathcal{B}(u)$ the distance between $u_1$ and $u_2$ is at least $2h(kd)+1$. 
Thus $|X|\geq h(kd)$. 
Since $\Gamma$ satisfies $c.e.H.h.c.(d)$, we have 
$$
|N_{\Gamma}(X)|-d|X|\geq kd.
$$
We know that $|N_{\Gamma'}(X)|\geq |N_{\Gamma}(X)|-kd$, therefore
$$
|N_{\Gamma'}(X)|-d|X|\geq|N_{\Gamma}(X)|-kd-d|X|\geq 0.
$$

The proof for the case of subsets of $V'$ is completely analogous.

\end{proof}

The following definition will be useful in the proofs of the lemmas of the next section.
\begin{df}\label{acc}
Let $\Gamma:=(U,V,E)$ be a bipartite graph.  Assume that 
$\Gamma^{\star}=(U^{\star},V^{\star},E^{\star})$ is a subgraph of $\Gamma$ satisfying Hall's $d$-harem condition, and $M$ is a perfect $(1,d)$-matching in $\Gamma^{\star}$.

Let $X$ be a subset of $V$ and let $x\in X$.
We say that $x$ \textit{ is accessible from } $y\in N_{\Gamma}(X)$ \textit{ through } $X$ \textit{ by matching } $M$, (denoted by 
$y \xhookdoubleheadrightarrow{M,X} x$) if there exist two sequences of vertices $\{v'_0,\ldots,v'_n\}\subseteq X$ and 
$\{u'_0,\ldots,u'_{n-1}\}\subseteq N_{\Gamma}(X)$ such that 
\begin{itemize}
\item $v'_n=x$;
\item $(u'_i,v'_{i})\in M$ and $(u'_{i},v'_{i+1})\in E\setminus M$, where $i <n$;
\item $(y,v'_0)\in E$.
\end{itemize}
\end{df}

\begin{figure}[ht]
  \centering
    \includegraphics[width=0.7\textwidth]{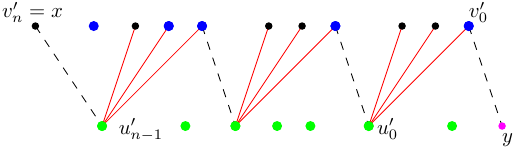}
	\caption{$x$ is accessible from $y$ by matching $M$ (red), the elements of $X$ are  blue and the elements of $N_{\Gamma}(X)$ are green}
\end{figure}

\begin{lm}\label{access}
Let $\Gamma=(U,V,E)$ be a bipartite graph. 
Assume that $\Gamma^{\star}=(U^{\star},V^{\star},E^{\star})$ is a subgraph of $\Gamma$ satisfying Hall's $d$-harem condition, and $M$ is a perfect $(1,d)$-matching in $\Gamma^{\star}$.

Assume that 
\begin{itemize}
\item $\widehat{v}\in N_{\Gamma}(U^{\star})\setminus V^{\star}$;
\item $X$ is a minimal connected subset in $V^{\star}\cup \{\widehat{v}\}$ with $|N_{\Gamma}(X)|< \frac{1}{d}|X|$;
\item $\widehat{u}\in N_{\Gamma}(X)\setminus U^{\star}$. 
\item $(\widehat{u},\widehat{v})\notin E$. 
\end{itemize}
Then $\widehat{u} \xhookdoubleheadrightarrow{M,X} \widehat{v}$.
\end{lm}

\begin{proof}
It is clear that $\widehat{v}\in X$. 
Let $X'$ denote the subset of all elements of $X$ that are either adjacent to $\widehat{u}$ or accessible from $\widehat{u}$ through $X$ by the matching $M$.  
Since $\widehat{u}\in N_{\Gamma}(X)$, $X'\neq \emptyset$. 
In order to get a contradiction, assume that $\widehat{v}\not\in X'$. 
We will show that 
$|N_{\Gamma}(X\setminus X')|< \frac{1}{d}|X\setminus X'|$. 

Let $|X'|=l$ and let 
$U_M (X')=\{ u\in U \, | \, (\exists v\in X') (u,v)\in M\}$. 
Since elements of $X\setminus X'$ are not accessible from $\widehat{u}$ through $X$ by the matching $M$, then 
$N_{\Gamma}(X \setminus X') \cap U_M (X') = \emptyset$. 

Using this we see that 
$$
|N_{\Gamma}(X\setminus X')|\leq |N_{\Gamma}(X)|-|U_M(X')|. 
$$
Since $M$ is a $(1,d)$-matching, each element of $U_M(X')$ can be matched with at most $d$ elements from $X'$. 
Therefore $|U_M(X')|\geq \lceil\frac{l}{d}\rceil$ and 
$$
|N_{\Gamma}(X\setminus X')|\leq |N_{\Gamma}(X)|-|U_M(X')|\leq |N_{\Gamma}(X) |-\lceil\frac{l}{d}\rceil<\frac{1}{d}(|X|-l)=\frac{1}{d}|X\setminus X'|. 
$$ 
So $|N_{\Gamma}(X\setminus X')|< \frac{1}{d}|X\setminus X'|$  and $X\setminus X'$ is smaller than $X$, a contradiction with the choice of the latter.
As a result $\widehat{v}\in X'$, and consequently $\widehat{u} \xhookdoubleheadrightarrow{M,X} \widehat{v}$.
\end{proof}

This lemma will be used in situations when some $(1,d-1)$-fans in the matching are replaced by some new fans.  
The following technical lemma will additionally guarantee that 
Hall's $d$-harem condition can be preserved after some replacement of $(1,d-1)$-fans. 

\begin{lm}[Replacement of a fan in a finite matching]\label{hccom}
Let 
\begin{itemize}
\item $\Gamma=(U,V,E)$ be a highly computable bipartite graph satisfying $c.e.H.h.c.(d)$ with a witness $h$;

\item $\Gamma^{\star}=(U^{\star},V^{\star})$ be a subgraph of $\Gamma$ such that $\Gamma\setminus \Gamma^{\star}$ consists of $n$ $(1,d-1)$-fans;

\item $u\in U$ and $\mathcal{B}(u)$ be a ball in $\Gamma$ of $u$ of odd radius $\ge \max\{4h(d(n+1))+3,5\}$;

\item $\mathfrak{F}_u$ be an $(1,d-1)$-fan in $\Gamma^{\star}$ with the root $u$;

\item $\mathfrak{S}$ be an $(1,d-1)$-fan in $\Gamma \setminus \Gamma^{\star}$ with the root $u^{\perp}$; 

\item $\mathfrak{M}$ be an 
$(1,d)$-matching in the ball $((\Gamma^{\star}\setminus \mathfrak{F}_u )\cup \mathfrak{S})\cap \mathcal{B}(u)$ 
which satisfies the conditions of perfect $(1,d)$-matchings for all vertices that are in $\mathcal{B}(u)\setminus \mathcal{S}(u)$. 
\end{itemize} 

Assume that $\Gamma^{\star}$ satisfies 
$c.e.H.h.c.(d)$ with the witness 
$$
\widehat{h}(m) = 
\left\{
\begin{array}{rr}
0, \quad \text{if} \quad m=0, \ \\
h(m+nd), \quad \text{if} \quad m > 0.
\end{array}\right.
$$
If $\Gamma^{\star}\setminus \mathfrak{F}_u$ does not satisfy Hall's $d$-harem condition, 
then $(\Gamma^{\star}\setminus \mathfrak{F}_u) \cup \mathfrak{S}$ already satisfies Hall's $d$-harem condition.
\end{lm}

\begin{proof}
The proof contains several claims. 
\begin{clm} \label{dist0} 
All subsets of $U^{\star}\setminus \{u\}$ satisfy Hall's $d$-harem condition in $\Gamma^{\star}\setminus \mathfrak{F}_u$. 
\end{clm} 
\begin{proof}[Proof of Claim \ref{dist0}] 
Assume that $X\subset U^{\star}\setminus \{u\}$ does not satisfy Hall's $d$-harem condition. 
If $X$ is not connected, then its neighborhood is a disjoint union of the neighborhoods of the connected components of $X$. 
Therefore, without loss of generality, we may assume that $X$ is connected. 
Since $\Gamma^{\star}$ satisfies Hall's $d$-harem condition,  there exists $u'\in X$ such that the distance between $u'$ and $u$ is equal to $2$.
On the other hand, the matching $\mathfrak{M}$ witnesses Hall's $d$-harem condition for all subsets of $((U^{\star}\setminus \mathfrak{F}_u )\cup \{u^{\perp}\})\cap\mathcal{B}(u)$.
Therefore 
$X\not\subseteq ((U^{\star}\setminus \{u\})\cup \{u^{\perp}\})\cap\mathcal{B}(u)$.  
It follows that there exists $u''\in X\setminus \mathcal{B}(u)$. 
The distance between $u'$ and $u''$ is at least $4h(d(n+1))+1$, i.e. $|X|\geq h(d(n+1))+1$. 
By the definition of $\hat{h}$,  
$$
|N_{\Gamma^{\star}}(X)|\geq d|X|+d. 
$$
Thus 
$$
|N_{\Gamma^{\star}\setminus \mathfrak{F}_u}(X)|\geq |N_{\Gamma^{\star}}(X)|-(d-1)\geq d|X|+1,
$$ 
i.e. $X$ satisfies Hall's $d$-harem condition in $\Gamma^{\star}\setminus \mathfrak{F}_u$.
\end{proof} 

Assume that $\Gamma^{\star}\setminus \mathfrak{F}_u$ does not satisfy Hall's $d$-harem condition.  
By this claim, there is $X\subseteq V^{\star}\setminus \mathfrak{F}_u$ that does not satisfy Hall's $d$-harem condition in $\Gamma^{\star}\setminus \mathfrak{F}_u$. 
We fix a connected one that is minimal. 
Observe that $u$ must be in $N_{\Gamma^{\star}}(X)$.
Indeed, otherwise $|N_{(\Gamma^{\star}\setminus \mathfrak{F}_u)}(X)|=|N_{\Gamma^{\star}}(X)|$. 
Since $\Gamma^{\star}$ satisfies $c.e.H.h.c.(d)$ we get a contradiction.

\begin{clm}\label{dist1}
For any vertex $v$ from $X$ the distance between $u$ and $v$ cannot exceed $2h(d(n+1))-1$.
\end{clm}
\begin{proof}[Proof of Claim \ref{dist1}]
Since $\Gamma$ is bipartite, then distance between $u$ and any element of the set $X$ is odd.
If for some $v\in X$ the distance between $u$ and $v$ is at least $2h(d(n+1))+1$, then $X$ has at least $h(d(n+1))$ elements. Since $\widehat{h}$ is a witness of $c.e.H.h.c.(d)$ for $\Gamma^{\star}$, we have $$|N_{\Gamma^{\star}\setminus\mathfrak{F}_u}(X)|\geq|N_{\Gamma^{\star}}(X)|-1\geq \frac{1}{d}|X|+1-1\geq \frac{1}{d}|X|,$$ which contradicts that $X$ does not satisfy Hall's $d$-harem condition.
\end{proof}
As a consequence of Claim \ref{dist1}, $X\subset(\mathcal{B}(u)\setminus \mathcal{S}(u))$.
The following claim, together with the above observations, guarantees that $\{u,u^{\perp}\}\subset N_{\Gamma^{\star}\cup \mathfrak{S}}(X)$. 

\begin{clm}\label{dist2}
The element $u^{\perp}$ belongs to $N_{(\Gamma^{\star}\setminus \mathfrak{F}_u)\cup \mathfrak{S}}(X)$ and the distance between $u^{\perp}$ and $u$ is at most $2h(d(n+1))$.
\end{clm}

\begin{proof}[Proof of Claim \ref{dist2}]
Since $X\subset(\mathcal{B}(u)\setminus \mathcal{S}(u))$ and $\mathfrak{M}$ satisfies the conditions of perfect $(1,d)$-matchings for all subsets of $((\Gamma^{\star}\setminus \mathfrak{F}_u )\cup \mathfrak{S}) \cap (\mathcal{B}(u)\setminus \mathcal{S}(u))$, then 
$$
|N_{(\Gamma^{\star}\setminus \mathfrak{F}_u)\cup \mathfrak{S}}(X)|\geq \frac{1}{d}|X|>|N_{\Gamma^{\star}\setminus\mathfrak{F}_u}(X)|.
$$ 
Clearly $u^{\perp}$ is in $N_{(\Gamma^{\star}\setminus \mathfrak{F}_u)\cup \mathfrak{S}}(X)$.

If the distance between $u^{\perp}$ and $u$ is at least $2h(nd+1)+2$ then there exists an element of $X$ which is at distance at least $2h(nd+1)+1$ from $u$. 
Contradiction with Claim \ref{dist1}.
\end{proof}

Let us prove that Hall's $d$-harem condition is satisfied in $(\Gamma^{\star}\setminus \mathfrak{F}_u) \cup \mathfrak{S}$ for subsets of $(U^{\star}\setminus\{u\})\cup\{u^{\perp}\}$.
By Claim \ref{dist0} all subsets of $U^{\star}\setminus \{u\}$ satisfy Hall's $d$-harem condition in $\Gamma^{\star}\setminus \mathfrak{F}_u$ and so in $(\Gamma^{\star}\setminus \mathfrak{F}_u) \cup \mathfrak{S}$ too.
Therefore, it remains to consider the case of subsets of $(U^{\star}\setminus\{u\})\cup\{u^{\perp}\}$ that contain $u^{\perp}$.
Trying to verify Hall's $d$-harem condition in $(\Gamma^{\star}\setminus \mathfrak{F}_u) \cup \mathfrak{S}$ for  $Y\subset (U^{\star}\setminus\{u\})\cup\{u^{\perp}\}$ we may assume that $Y\nsubseteq \mathcal{B}(u)$.  
Indeed, otherwise the matching $\mathfrak{M}$ already witnesses Hall's $d$-harem condition for all subsets of $(\Gamma^{\star}\setminus \mathfrak{F}_u)\cup \mathfrak{S}$ contained in $\mathcal{B}(u)\setminus \mathcal{S}(u)$.
As above, we also assume that $Y$ is connected.

Let $u_1 \in Y\setminus \mathcal{B}(u)$.  
The distance between $u$ and $u_1$ is at least $4h(d(n+1))+4$.
Since $u^{\perp} \in N_{\Gamma^{\star}\cup \mathfrak{S}}(X)\cap Y$, we see by Claim \ref{dist2} that there is some $u_2 \in Y$ at the distance $\le 2h(d(n+1))$ from $u$. 
Then the distance between $u_1$ and $u_2$ is at least $4h(d(n+1))+4-(2h(d(n+1))+2)$, i.e at least $2h(d(n+1))+2$.

\begin{figure}[ht]\label{fig:sets}
   \centering
   \includegraphics[width=0.7\textwidth]{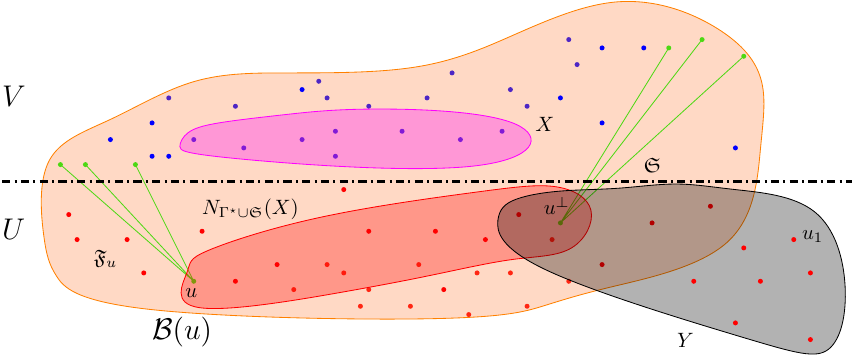}
	\caption{}
\end{figure}

Thus $|Y| > h(d(n+1))$, and 
$$
|N_{\Gamma}(Y)|-\frac{1}{d}|Y|\geq d(n+1).
$$
On the other hand,
$$
|N_{(\Gamma^{\star}\setminus \mathfrak{F}_u)\cup\mathfrak{S}}(Y)|\geq |N_{\Gamma}(Y)|-n(d-1), 
$$ 
hence:
$$
|N_{(\Gamma^{\star}\setminus \mathfrak{F}_u)\cup \mathfrak{S}}(Y)|-\frac{1}{d}|Y|\geq |N_{\Gamma}(Y)|-\frac{1}{d}|Y|-n(d-1)\geq n\geq 0.
$$
It follows that Hall's $d$-harem condition holds for any $Y\subset (U^{\star}\setminus\{u\})\cup\{u^{\perp}\}$ such that $u^{\perp}\in Y$. 
This finishes the case of subsets of $(U^{\star}\setminus\{u\})\cup\{u^{\perp}\}$.

It remains to show that Hall's $d$-harem condition is satisfied in $(\Gamma^{\star}\setminus \mathfrak{F}_u) \cup \mathfrak{S}$ for every $Y\subset V\cap ((\Gamma^{\star}\setminus \mathfrak{F}_u)\cup \mathfrak{S})$.
We may assume that $N_{\Gamma^{\star}\cup \mathfrak{S}}(Y)\cap \{u,u^{\perp}\} \neq \emptyset$. 
Indeed, otherwise apply the assumption that $\Gamma^{\star}$ satisfies Hall's $d$-harem condition.  
Also note again that the matching $\mathfrak{M}$ witnesses Hall's $d$-harem condition for all subsets of $(\Gamma^{\star}\setminus \mathfrak{F}_u)\cup \mathfrak{S}$ contained in $\mathcal{B}(u)\setminus \mathcal{S}(u)$. 
In particular, we may assume that $Y\nsubseteq (\mathcal{B}(u)\setminus \mathcal{S}(u))$. 
As above, we only consider connected $Y\subset (V\cap ((\Gamma^{\star}\setminus \mathfrak{F}_u)\cup \mathfrak{S})$.   

Take any element $v_1$ of $Y\setminus (\mathcal{B}(u)\setminus \mathcal{S}(u))$. 
It is clear that the distance from $u$ to $v_1$ is at least $4h(d(n+1))+3$. 
Recall our assumption that one of the elements $u,u^{\perp}$ is in $N_{\Gamma^{\star}\cup \mathfrak{S}}(Y)$.
If $u\in N_{\Gamma^{\star}\cup \mathfrak{S}}(Y)$ then $Y$ contains a vertex at the distance $1$ from $u$. 
If $u^{\perp}\in N_{\Gamma^{\star}\cup \mathfrak{S}}(Y)$ then by Claim \ref{dist2} $Y$ contains a vertex at the distance $2h(d(n+1))+1$ from $u$. 
In either case we can find $v_2\in Y$ at the distance at most $2h(d(n+1))+1$ from $u$. 
We see that the distance between $v_1$ and $v_2$ is at least $2h(d(n+1))+2$.
\begin{figure}[ht]\label{fig:sets1}
  \centering
    \includegraphics[width=0.7\textwidth]{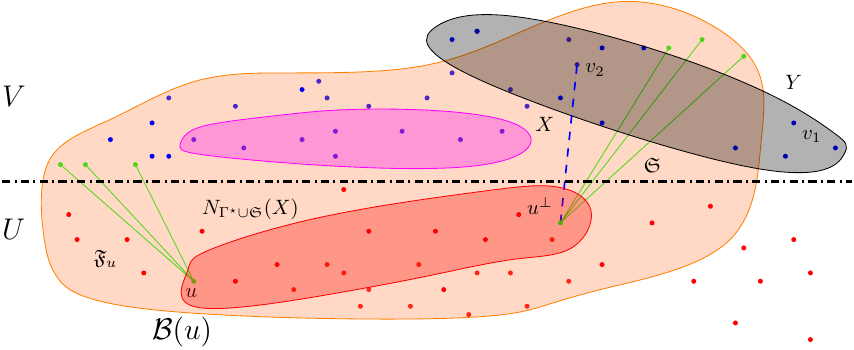}
	\caption{}
\end{figure}
Thus, $Y$ has at least $h(d(n+1))$ elements and  
$$
|N_{\Gamma(Y)}|-\frac{1}{d}|Y|\geq d(n+1).
$$
On the other hand, using 
$$
|N_{\Gamma^{\star}\cup \mathfrak{S}\setminus \mathfrak{F}_u}(Y)|\geq|N_{\Gamma(Y)}|-n 
$$ 
we see that:
$$
|N_{\Gamma^{\star}\cup \mathfrak{S}\setminus \mathfrak{F}_u}(Y)|- \frac{1}{d}|Y|\geq |N_{\Gamma(Y)}|-n - \frac{1}{d}|Y|\geq d(n+1)-n=n(d-1)+d\geq 0, 
$$
i.e. Hall's $d$-harem condition is also satisfied.
\end{proof}

The final lemma of this section shows how to find  a witness of $c.e.H.h.c.(d)$ in a subgraph.

\begin{lm}\label{chc}
Assume that $\Gamma=(U,V,E)$ is a highly computable bipartite graph satisfying $c.e.H.h.c.(d)$ with witness $h$.
Let 
\begin{itemize} 
\item $\mathfrak{M}$ be an $(1,d-1)$-matching in $\Gamma$ consisting of $\ell$ fans, 
\item $\Gamma^{\star} =(U^{\star},V^{\star})$ be the induced subgraph of $\Gamma$ without the vertices of $\mathfrak{M}$,  
\item $\Gamma^{\star}$ satisfy Hall's $d$-Harem condition.
\end{itemize} 
Then 
$$
\widehat{h}(m) = 
\left\{
\begin{array}{rr}
0, \quad \text{if} \quad m=0, \ \\
h(m+\ell d), \quad \text{if} \quad m > 0.
\end{array}\right.
$$
is a witness of $c.e.H.h.c.(d)$ for $\Gamma^{\star}$.
\end{lm}

\begin{proof} 

Since $\Gamma^{\star}$ satisfies Hall's $d$-harem condition, the case of $m=0$ is obvious. 
Assume that  $m>0$.  
First, we prove that $\widehat{h}$ is a witness of $c.e.H.h.c.(d)$ for subsets of $U^{\star}$. 
The inequality $|X|\geq\widehat{h}(m)$ implies $|X|\geq h(m+\ell d)$, i.e.   
$$
|N_{\Gamma}(X)|-d|X| -\ell d\geq m.
$$
Since $d\geq 2$ and 
$$
|N_{\Gamma^{\star}}(X)|\geq |N_{\Gamma}(X)|-\ell (d-1), 
$$
we have
$$
|N_{\Gamma^{\star}}(X)|-d|X|\geq |N_{\Gamma}(X)|-\ell (d-1)-d|X|\geq m.
$$

In order to show that $\widehat{h}$ works for $V^{\star}$ we start with the observation that for each $X\subset V^{\star}$, 
$$
|N_{\Gamma^{\star}}(X)|\geq |N_{\Gamma}(X)|-\ell.
$$ 
Therefore, using $d\geq 2$, we see that when 
$|X|\geq \widehat{h}(m)$,  
$$
|N_{\Gamma}(X)|-\frac{1}{d}|X|-m-\ell d\geq 0.
$$
It follows that: 
$$
|N_{\Gamma^{\star}}(X)|-\frac{1}{d}|X|\geq |N_{\Gamma}(X)|-\frac{1}{d}|X|-\ell d\geq m.
$$ 
\end{proof}

\section{Graphs constructed in parts 1 and 2 of each step satisfy \textit{c.e.H.h.c.(d)}}\label{lemmas}

\subsection{Claims  \ref{c1p1ef}, \ref{csnp1ef}}
Before stating Lemma \ref{1st} (proving claims \ref{c1p1ef} and \ref{csnp1ef}) we recall some notation used in case (2) of the $n+1$-st step of the construction.

\begin{itemize}
\item $\Gamma^{(n)}$ is $U^{(n)}$-reflected;
\item $\Gamma^{(n)\star}$ is a subgraph of $\Gamma^{(n)}$ obtained by removal of $(1,d-1)$-fans with roots belonging to the set $U^{(n)\perp}$; 
\item $\Gamma^{(n)\star}$ satisfies $c.e.H.h.c.(d)$, and $u_n \in \Gamma^{(n)\star}$;
\item $|U^{(n)\perp}|\leq n$;
\item $\mathfrak{M}^1_n$ is a finite $(1,d)$-matching in the bipartite graph 
$\mathcal{B}^{(n)\star}(u_n)=\mathcal{B}^{(n)}(u_n)\cap \Gamma^{(n)\star}$. 
It satisfies the conditions of the perfect $(1,d)$-matchings for all vertices that are at the distance less than $\max\{4h(3d(n+1))+3,5\}$ from $u_n$.
\end{itemize}
 
\begin{lm}\label{1st}
The bipartite graph $\Gamma^{(n)\star}(-u_n)$ is highly computable for any $n$.   
Furthermore, one of the following holds:
\begin{itemize}
\item  $\Gamma^{(n)\star}(-u_n)$ satisfies $c.e.H.h.c.(d)$;
\item the graph $\Gamma^{(n)\star}(-u_n,+u_j^{\perp})$ satisfies $c.e.H.h.c.(d)$ for some vertex $u_j^{\perp}\in U^{(n)\perp}$.
\end{itemize}
In the latter case the element $u^{\perp}_j$ can be computed and the bipartite graph $\Gamma^{(n)\star}(-u_n,+u_j^{\perp})$ is highly computable. 
\end{lm}

\begin{rem}
If $U^{(n)\perp}=\emptyset$, then Lemma \ref{1st} implies that the graph $\Gamma^{(n)\star}(-u_n)$ satisfies $c.e.H.h.c.(d)$. 
In particular, Claim \ref{c1p1ef} holds.
\end{rem}

\begin{proof}[Proof of Lemma \ref{1st}] 
Let 
$$
\widehat{h}_1(m) = \left\{
\begin{array}{rr}
0, \quad \text{if} \quad m=0, \ \\
h(m+(3n+1)d), \quad \text{if} \quad m > 0.
\end{array}\right.
$$
and 
$$
\widehat{h}_2(m) = \left\{
\begin{array}{rr}
0, \quad \text{if} \quad m=0, \ \\
h(m+3nd), \quad \text{if} \quad m > 0.
\end{array}\right.
$$

\begin{clm}\label{hall1}
Assume that one of the graphs $\Gamma^{(n)\star}(-u_n)$ or $\Gamma^{(n)\star}(-u_n,+u_j^{\perp})$ satisfies Hall's $d$-harem conditions. 
Then in the first case the function $\widehat{h}_1$ witnesses $c.e.H.h.c.(d)$ for $\Gamma^{(n)\star}(-u_n)$ and in the second case the function $\widehat{h}_2$ witnesses $c.e.H.h.c.(d)$ for $\Gamma^{(n)\star}(-u_n,+u_j^{\perp})$.
\end{clm}
\begin{proof}
Apply Lemma \ref{chc}. 
In the first case, view $\Gamma^{(0)}$ as $\Gamma$ and $\Gamma^{(n)\star}(-u_n)$ as $\Gamma^{\star}$ in this lemma. 
The number $\ell$ from the lemma is equal to the number of fans both in $\bigcup\limits_{i=1}^{n}M_{i-1}$ and in  $\Gamma^{(n)\perp}$, plus one, for the fan corresponding to $u_n$. 
At every step, at most one new fan appears in $\Gamma^{(n)\perp}$ and at most two new fans appear in $M_{n} \setminus\Gamma^{(n-1)\perp}$. 
Thus $\ell \le 3n+1$. 
Since $\Gamma^{(n)\star}(-u_n)$ satisfies Hall's $d$-harem conditions, the lemma works.

In the second case, view $\Gamma^{(0)}$ as $\Gamma$ and $\Gamma^{(n)\star}(-u_n,+u_j^{\perp})$ as $\Gamma^{\star}$. 
To compute the number corresponding to $\ell$ from the lemma, we should take the number of fans in $\bigcup\limits_{i=1}^{n}M_{i-1}$ and $\Gamma^{(n)\perp}$ together, then increase it by one, corresponding to the fan of $u_n$, and then subtract one, corresponding to the fan of $u_j^{\perp}$. 
Therefore, $\ell \le 3n$.  
Since $\Gamma^{(n)\star}(-u_n,+u_j^{\perp})$ satisfies Hall's $d$-harem conditions, Lemma \ref{chc} works again.
\end{proof}

It is clear that both $\Gamma^{(n)\star}(-u_n)$ and $\Gamma^{(n)\star}(-u_n,+u_j^{\perp})$ are highly computable bipartite graphs (assuming that $u^{\perp}_j$ is computed).
Thus, it is enough to show that one of these graphs satisfies Hall's $d$-harem condition (then apply the claim).
We know that the graph $\Gamma^{(n)\star}$ satisfies Hall's $d$-harem condition. 
Let $\mathfrak{v}$ denote the only vertex from the set $\{v^0_{n,1},\ldots ,v^0_{n,d}\}$ that belongs to $\Gamma^{(n)\star}(-u_n)$.
By Proposition \ref{hallincom} the choice of $u_n, v^0_{n,1},\ldots ,v^0_{n,d}$ ensures that $\Gamma^{(n)\star}(-u_n,-\mathfrak{v})$ satisfies Hall's $d$-harem condition as well. 
For further arguments: 

$\bullet$ let $\mathfrak{M}$ denote a perfect $(1,d)$-matching in $\Gamma^{(n)\star}(-u_n,-\mathfrak{v})$. 

Since $U^{(n)\star}(-u_n)=U^{(n)\star}(-u_n,-\mathfrak{v})$, for $X\subset U^{(n)\star}(-u_n)$ we have 
\begin{equation}\label{Uhcond}
|N_{\Gamma^{(n)\star}(-u_n)}(X)|\geq |N_{\Gamma^{(n)\star}(-u_n,-\mathfrak{v})}(X)|\geq d|X|.
\end{equation}

The corresponding property also holds for all subsets of $V^{(n)\star}(-u_n)$ that do not contain $\mathfrak{v}$. 
Therefore if $\Gamma^{(n)\star}(-u_n)$ does not satisfy Hall's $d$-harem condition, then a witness of this is a finite 
$X \subset V^{(n)\star}(-u_n)$ which contains $\mathfrak{v}$.

\begin{clm}\label{ins}
If $X$ is a connected subset of $V^{(n)\star}(-u_n)$ and $
|N_{\Gamma^{(n)\star}(-u_n)}(X)|< \frac{1}{d}|X|$, then $X\subset(\mathcal{B}(u_n)\setminus \mathcal{S}(u_n))$.
\end{clm}
\begin{proof}
Observe that by the choice of the radius of the ball $\mathcal{B}(u_n)$, 
for any connected $X\nsubseteq (\mathcal{B}(u_n)\setminus \mathcal{S}(u_n))$ with $\mathfrak{v}\in X$ we have  $|X|\geq 2h(d(3n+1))+1$. 
Therefore 
$$
|N_{\Gamma^{(0)}}(X)|-\frac{1}{d}|X|\geq d(3n+1).
$$
On the other hand at every step at most 3 fans from $\Gamma$ are added to the matching, hence $|U^{(0)}\setminus U^{(n)\star}(-u_n)|\leq 3n+1$. As a consequence 
$$
|N_{\Gamma^{(n)\star}(-u_n)}(X)|\geq |N_{\Gamma^{(0)}}(X)|-(3n+1)\geq |N_{\Gamma^{(0)}}(X)|-d(3n+1)\geq \frac{1}{d}|X|.
$$
\end{proof}

Since the neighborhood of any set is decomposed into a disjoint union of the neighborhoods of connected subsets,  applying the claim, we see that if $\Gamma^{(n)\star}(-u_n)$ does not satisfy Hall's $d$-harem condition, then this is witnessed by a finite connected $X \subset V^{(n)\star}(-u_n) \cap(\mathcal{B}(u_n)\setminus\mathcal{S}(u_n))$ which contains $\mathfrak{v}$. 
Using high computability of $\Gamma^{(n)}$ and finiteness of $\mathcal{B}(u_n)\setminus \mathcal{S}(u_n)$, we compute $X$ with these properties; and, furthermore, it can be taken to be minimal.  
We will now prove the lemma into two steps.  \\ 
$\bullet$ {\em Step 1. There exists some} $u_j^{\perp}\in N_{\Gamma^{(n)}(-u_n)}(X)$. \\  
This is the main part of the proof of the lemma. 
In fact we will prove that 
\begin{equation}\label{0}
|N_{\Gamma^{(n)}(-u_n)}(X)|\geq \frac{1}{d}|X|.
\end{equation} 
Then the existence of the required $u^{\perp}_j$ follows from the inequality $|N_{\Gamma^{(n)\star}(-u_n)}(X)|<\frac{1}{d}|X|$. 

First, consider the case when 
$X=\{\mathfrak{v}\}$. 
The vertex $u_{\mathfrak{v}}$ either belongs to $U^{(n)\star}(-u_n)$ or to $U^{(n)\perp}$. 
By inequality (\ref{Uhcond}) and the fact that $U^{(n)\perp}$ consist of $(1,d-1)$-fans, in each case we have $|N_{\Gamma^{(n)}(-u_n)}(u_{\mathfrak{v}})|\geq d-1$.
Moreover, if the equality
\begin{equation}\label{11}
|N_{\Gamma^{(n)}(-u_n)}(u_{\mathfrak{v}})|= d-1
\end{equation} 
holds, then $v_{u_n}\not\in \Gamma^{(n)}$. 
Indeed, if $v_{u_n} \in \Gamma^{(n)}$, then $v_{u_n}\in N_{\Gamma^{(n)}(-u_n)}(u_{\mathfrak{v}})$ (by reflectedness).
On the other hand, the existence of $\mathfrak{M}$ together with equality (\ref{11}) implies that $(u_{\mathfrak{v}},v_{u_n})\in\Gamma^{(n)\perp}$. 
However, the first part of the $n+1$-st step of the construction implies that $(u_n,\mathfrak{v})$ should be added to the matching $M_n$ already: the element $\mathfrak{v}$ plays the role of $v^0_{n,j}$ in case (2) of the first part. 
In particular, $\mathfrak{v}\notin \Gamma^{(n)\star}(-u_n)$, a contradiction.

Our next observation is 
$$
|N_{\Gamma^{(n)}(-u_n)}(\mathfrak{v})| \geq |N_{\Gamma^{(n)}(-u_n)}(u_\mathfrak{v})|-1. 
$$
This follows by $U^{(n)}$-reflectedness of $\Gamma^{(n)}$: $v_{u_n}$ is the only possible element adjacent to $u_{\mathfrak{v}}$ that does not have the left copy in $U^{(n)}(-u_n)$.
Additionally, note that the equality 
\begin{equation}\label{22}
|N_{\Gamma^{(n)}(-u_n)}(\mathfrak{v})|= |N_{\Gamma^{(n)}(-u_n)}(u_\mathfrak{v})|-1
\end{equation}
holds only if $v_{u_n}\in \Gamma^{(n)}(-u_n)$.
 
Therefore equalities (\ref{11}), (\ref{22}) are not consistent, i.e.:
$$
|N_{\Gamma^{(n)}(-u_n)}(\mathfrak{v})|	 >  (d-1)-1 \mbox{ and, in particular, } 
|N_{\Gamma^{(n)}(-u_n)}(\mathfrak{v})| \geq \frac{1}{d}.
$$ 
Since $X$ is a singleton,
$$
|N_{\Gamma^{(n)}(-u_n)}(X)|\geq \frac{1}{d}|X|.
$$
As a result we have inequality (2) and the fact  that there is some 
$u_j^{\perp}\in N_{\Gamma^{(n)}(-u_n)}(X)$.  

Consider this case when $X\not=\{ \mathfrak{v}\}$. Then $|N_{\Gamma^{(n)\star}(-u_n)}(X)|\geq 1$ and 
by the assumption 
$|N_{\Gamma^{(n)\star}(-u_n)}(X)|<\frac{1}{d}|X|$, we have $|X|>d$. 

Let $U_X:=\{u\in U: v_u\in X\}$ in $\Gamma^{(n)}(-u_n)$. 
Applying the fact that $V^{(n)}$ is a subset of the right copy of $U^{(n)}$ we arrive at two possibilities:
\begin{enumerate}[(i)]
\item $v_{u_n}\notin X$ and  $|X|=|U_X|$;
\item $v_{u_n}\in X$ and $|X|=|U_X|+1$.
\end{enumerate} 
We claim that in either case the inequality (\ref{0}) follows from Lemma \ref{neighsize2}.

\begin{figure}[ht]
  \centering
    \includegraphics[width=0.7\textwidth]{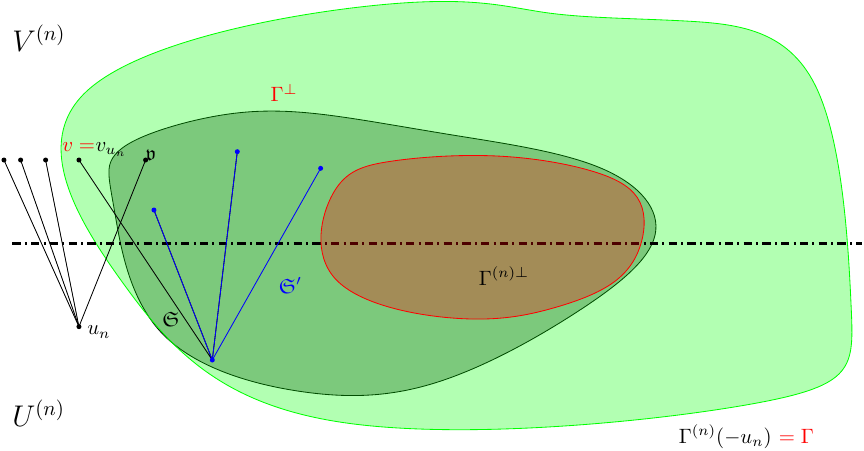}
	\caption{Application of Lemma \ref{neighsize2}; red letters correspond to the  notation of Lemma \ref{neighsize2}.}
\end{figure}

Indeed, let $\mathfrak{S}$ denote the fan from the matching $\mathfrak{M}$ containing $v_{u_n}$ and $\mathfrak{S}'$ denote $\mathfrak{S}$ with $v_{u_n}$ removed.
The conditions of Lemma \ref{neighsize2} are satisfied if we consider $\Gamma^{(n)}(-u_n)$ as $\Gamma$ in that lemma, $v_{u_n}$ as $v$, and 
$\Gamma^{(n)\perp}\cup\{\mathfrak{v}\}\cup \mathfrak{S}'$ as $\Gamma^{\perp}$. 
Indeed, $\Gamma^{(n)}(-u_n,-v_{u_n})$ is $U^{(n)}(-u_n,-v_{u_n})$-reflected. 
Moreover, the corresponding graph $\Gamma^{\star}$ from Lemma \ref{neighsize2} is obtained by removal of $\Gamma^{(n)\perp}\cup\{\mathfrak{v}\}\cup\mathfrak{S}'$ from $\Gamma^{(n)}(-u_n,-v_{u_n})$, therefore it is the same as $\Gamma^{(n)\star}(-u_n,-\mathfrak{v})$ with $\mathfrak{S}$ removed. 
Since $\mathfrak{S}$ is a fan from a perfect $(1,d)$-matching in $\Gamma^{(n)\star}(-u_n,-\mathfrak{v})$, we know that $\Gamma^{\star}$ satisfies Hall's $d$-harem condition.

Therefore in case (i), by Lemma \ref{neighsize2}:
$$|N_{\Gamma^{(n)}(-u_n)}(X)|\geq (d-1)|X|-1.$$
This fact combined with inequalities $d\geq 2$ and $|X|>d$ implies that 
$$|N_{\Gamma^{(n)}(-u_n)}(X)|\geq\frac{1}{d}|X|.$$
In case (ii), by Lemma \ref{neighsize2}:
$$|N_{\Gamma^{(n)}(-u_n)}(X\setminus \{v_{u_n}\})|\geq (d-1)(|X|-1)-1, 
$$ 
$$
\mbox{ i.e. } \, |N_{\Gamma^{(n)}(-u_n)}(X)| \geq (d-1)(|X|-1)-1.
$$
Note that the inequality is strict. 
Indeed, by Lemma \ref{neighsize2} the equality 
$$
|N_{\Gamma^{(n)}(-u_n)}(X\setminus \{v_{u_n}\})|=(d-1)(|X|-1)-1
$$
implies that 
$v_{u_n}\in N_{\Gamma^{(n)}(-u_n)}(U_{X\setminus \{v_{u_n}\}})$ and $N_{\Gamma^{(n)\star}(-u_n,-\mathfrak{v})}(U_{X\setminus \{v_{u_n}\}})=\emptyset$.
Thus, $v_{u_n}\in V^{(n)\perp}$. 
On the other hand, $v_{u_n}\in X\subset \Gamma^{(n)\star}$, a contradiction with the choice of $X$.

Again, inequalities $d\geq 2$ and $|X|>d$ imply 
$$|N_{\Gamma^{(n)}(-u_n)}(X)|\geq(d-1)(|X|-1)\geq\frac{1}{d}|X|.$$ 
This finishes our argument for the statement that the assumption 
$|N_{\Gamma^{(n)\star}(-u_n)}(X)|<\frac{1}{d}|X|$ implies 
$$
N_{\Gamma^{(n)}(-u_n)}(X)\cap U^{(n)\perp}(-u_n)\neq\emptyset , 
$$
i.e. there exists some $u_j^{\perp}\in N_{\Gamma^{(n)}(-u_n)}(X)$. 

The vertex $u_j^{\perp}$ that we have found, belongs to $N_{\Gamma^{(n)}(-u_n)\cap\mathcal{B}(u_n)}(X)$ (recall that $X\subseteq \mathcal{B}(u_n)\setminus \mathcal{S}(u_n)$). 
Thus, it can be found effectively by high computability of $\Gamma^{(n)}(-u_n)$.
Let us denote by $v^{\perp}_{j,1},\ldots, v^{\perp}_{j,d-1}$ 
the $(d-1)$-tuple of vertices adjacent to $u_j^{\perp}$ in $\Gamma^{(n)\perp}$. \\ 
$\bullet$ {\em Step 2. $\Gamma^{(n)\star}(-u_n,+u_j^{\perp})$ satisfies Hall's $d$-harem condition.} \\ 
We start with an application of Lemma \ref{access}.
Consider the induced subgraph of the graph $\Gamma^{(n)\star}(-u_n,-\mathfrak{v})$ that consists of all vertices adjacent to the edges of the matching $\mathfrak{M}^1_n$. 
We take it as the graph $\Gamma^{\star}$ from Lemma \ref{access}. 
Since $\mathfrak{M}^1_n$ is a perfect $(1,d)$-matching in the ball $\Gamma^{(n)\star}\cap\mathcal{B}(u_n)$, it follows that $\Gamma^{\star}$ satisfies Hall's $d$-harem condition. 
The matching $\mathfrak{M}^1_n$ plays the role of $M$ from that lemma and the bipartite graph $\Gamma^{(n)}(-u_n)$ plays the role of $\Gamma$. 
Apply Lemma \ref{access} for $\mathfrak{v}$, $X$, $u^{\perp}_j$.   
We see $u^{\perp}_j \xhookdoubleheadrightarrow{\mathfrak{M}^1_n,X} \mathfrak{v}$. 
This gives us sequences of vertices $\{v_0',\ldots,v'_n\}, \{u_0',\ldots,u'_{n-1}\}$ as in Definition \ref{acc}.

In order to prove that the graph $\Gamma^{(n)\star}(-u_n,+u_j^{\perp})$ satisfies Hall's $d$-harem condition, we  construct a perfect $(1,d)$-matching in the ball $\Gamma^{(n)\star}(-u_n,+u_j^{\perp})\cap \mathcal{B}(u_n)$. 
We set
$$
M':=(\mathfrak{M}^1_n\setminus\{(u_n,v^0_{n,1}),\ldots, (u_n,v^0_{n,d}),(u'_{0},v'_{0}),\ldots (u'_{n-1},v'_{n-1})\}) \cup
$$ 
$$
\{(u^{\perp}_j,v^{\perp}_{j,1}),\ldots,(u^{\perp}_j,v^{\perp}_{j,d-1}),(u^{\perp}_j,v'_{0}), (u'_{0},v'_{1}),\ldots (u'_{n-1},v'_{n})\},
$$ 
where $v'_n=\mathfrak{v} \in \{ v^0_{n,1},\ldots, v^0_{n,d}\}$.

\begin{figure}[ht]
  \centering
    \includegraphics[width=0.7\textwidth]{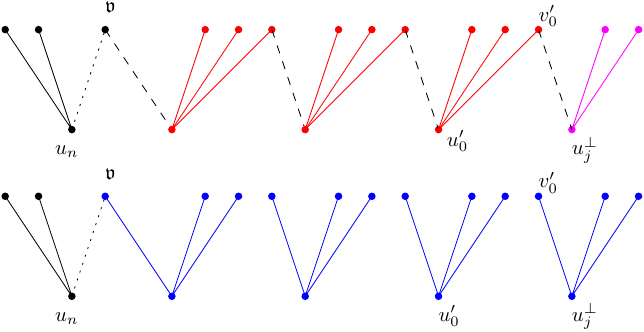}
	\caption{We replace the red fans in the matching $\mathfrak{M}^1_n$ by the blue fans to obtain the matching $M'$ in $\Gamma^{(n)\star}(-u_n,+u_j^{\perp})$.}
\end{figure}

We have obtained $M'$ by removing $d$ edges incident to $u_n$, adding $d$ edges incident to $u^{\perp}_j$, and the following replacement: for each of $u'_i$ we replace one edge incident to it by another incident edge (then $\mathfrak{v}$ becomes adjacent to one edge in $M'$). 
It follows that the matching $M'$ satisfies the conditions of the perfect $(1,d)$-matchings for all susbsets of $\Gamma^{(n)\star}(-u_n,+u_j^{\perp})\cap (\mathcal{B}(u_n)\setminus \mathcal{S}(u_n))$.

To finish the proof, note that the conditions of Lemma \ref{hccom} are satisfied if we consider $\Gamma^{(0)}$ as $\Gamma$ in this lemma, $\Gamma^{(n)\star}$ as $\Gamma^{\star}$,  $M'$ as $\mathfrak{M}$ and $\Gamma^{(n)\star}(-u_n,+u_j^{\perp})$ as $(\Gamma^{\star}\setminus \mathfrak{F}_u)\cup\mathfrak{S}$, i.e. $u_n$ is considered as $u$ and $u^{\perp}_j$ as $u^{\perp}$. 
Therefore, by Lemma \ref{hccom}, $\Gamma^{(n)\star}(-u_n,+u_j^{\perp})$ satisfies Hall's $d$-harem condition.
\end{proof}

\subsection{Notation used in proof of Lemma \ref{2nd}.}
Before stating the second lemma, we remind the reader the relevant notation from the construction.
\begin{itemize}
\item In Case 3A
\begin{itemize}
\item $\mathfrak{U}:=U^{(n+1)}\setminus U^{(n)\perp}$
\item $\mathfrak{V}:=V^{(n+1)}\setminus V^{(n)\perp}$
\item $\mathfrak{T}:=(\mathfrak{U},\mathfrak{V},\mathfrak{E})$, where $\mathfrak{E}$ is induced in $\Gamma$ by the sets of vertices $\mathfrak{U},\mathfrak{V}$. 
\item $\dot{U}^{(n)\perp}=U^{(n)\perp}\cap U^{(n+1)}$;\item $\dot{V}^{(n)\perp}=V^{(n)\perp}\cap V^{(n+1)}$
\item $\dot{\Gamma}^{(n)\perp}:=(\dot{U}^{(n)\perp},\dot{V}^{(n)\perp},\dot{E}^{(n)\perp})$, where $\dot{E}^{(n)\perp}$ is defined according to the set of fans of elements from $\dot{U}^{(n)\perp}$ putted into $\dot{\Gamma}^{(n)\perp}$.
\end{itemize}
\item In Case 3B
\begin{itemize}
\item $\mathfrak{U}:=U^{(n+1)}\setminus (U^{(n)\perp}\cup \{\dot{u}_n^{\perp}\})$ 
\item $\mathfrak{V}:=V^{(n+1)}\setminus (V^{(n)\perp}\cup \{\dot{v}_{n,i}^{\perp}: 1\leq i\leq d-1\})$.
\item $\mathfrak{T}:=(\mathfrak{U},\mathfrak{V},\mathfrak{E})$, where $\mathfrak{E}$ is induced in $\Gamma$ by the sets of vertices $\mathfrak{U},\mathfrak{V}$.\item $\dot{U}^{(n)\perp}:= (U^{(n)\perp}\cup\{\dot{u}_n^{\perp}\})\cap U^{(n+1)}$ with $\dot{V}^{(n)\perp}$ defined accordingly: $\dot{V}^{(n)\perp}:=(V^{(n)\perp}\cup \{\dot{v}_{n,k}^{\perp}:1\leq k \leq d-1\})\cap V^{(n+1)}$; 
\item $\dot{\Gamma}^{(n)\perp}:=(\dot{U}^{(n)\perp},\dot{V}^{(n)\perp},\dot{E}^{(n)\perp})$, where $\dot{E}^{(n)\perp}$ is defined according to the set of fans of elements from $\dot{U}^{(n)\perp}$ putted into $\dot{\Gamma}^{(n)\perp}$.
\end{itemize}
\item $\mathfrak{M}^2_n$ is a finite $(1,d)$-matching in the bipartite graph 
$\mathcal{B}^{(n)\star}(u^{l+1}_n)=\mathcal{B}^{(n)}(u^{l+1}_n)\cap \Gamma^{(n)\star}$. 
It satisfies the conditions of the perfect $(1,d)$-matchings for all vertices that are at the distance less than $\max\{4h(3d(n+1))+3,5\}$ from $u^{l+1}_n$; 
\item $\dot{v}^{l+1}_{n,i}$, $1 \le i \le d$ (or $d-1$), are vertices adjacent to $u^{l+1}_n$ in $\mathfrak{M}^2_n$ (or in $\Gamma^{(n)\perp}$);
\item $\dot{v}_1,\dot{v}_2$ are the elements among $\dot{v}^{l+1}_{n,i}$, $1 \le i \le d$ (or $d-1$), which are not added to $M^{l+1}_n$; there are at most two of them. 
\end{itemize}

\begin{lm}\label{2nd}
The bipartite graph $\mathfrak{T}$ is highly computable for any $n$. 
Moreover, one of the following holds:
\begin{itemize}
\item $\mathfrak{T}$ satisfies $c.e.H.h.c.(d)$;
\item the graph $\mathfrak{T}(+u_j^{\perp})$ satisfies $c.e.H.h.c.(d)$ for some $u_j^{\perp}\in \dot{U}^{(n)\perp}$;
\item the graph $\mathfrak{T}(+u_i^{\perp},+u_j^{\perp})$ satisfies $c.e.H.h.c.(d)$  for some $u_i^{\perp},u_j^{\perp}\in \dot{U}^{(n)\perp}$.
\end{itemize}
In the latter cases, the elements $u_i^{\perp},u_j^{\perp}$ can be computed and the corresponding bipartite graphs are highly computable. 
\end{lm}

\begin{rem}
If $|\dot{U}^{(n)\perp}|\leq 1$, then Lemma \ref{2nd} can be restated as follows. 
One of	the following holds:
\begin{itemize}
\item $\mathfrak{T}$ is a highly computable bipartite graph satisfying $c.e.H.h.c.(d)$;
\item $\Gamma^{(n+1)}$ is a highly computable bipartite graph satisfying $c.e.H.h.c.(d)$.
\end{itemize}
Therefore this Lemma proves Claim \ref{c1p2ef}.
\end{rem}

\begin{proof}[Proof of Lemma \ref{2nd}]
As in the proof of Lemma \ref{1st} we begin by showing that each of graphs from the formulation satisfies $c.e.H.h.c.(d)$ as long as it satisfies Hall's $d$-harem condition.

Let 
$$
\widehat{h}_1(m) = \left\{
\begin{array}{rr}
0, \quad \text{if} \quad m=0, \ \\
h(m+(3n+3)d), \quad \text{if} \quad m > 0.
\end{array}\right.,$$

$$
\widehat{h}_2(m) = \left\{
\begin{array}{rr}
0, \quad \text{if} \quad m=0, \ \\
h(m+(3n+2)d), \quad \text{if} \quad m > 0.
\end{array}\right.
$$
and
$$
\widehat{h}_3(m) = \left\{
\begin{array}{rr}
0, \quad \text{if} \quad m=0, \ \\
h(m+(3n+1)d), \quad \text{if} \quad m > 0.
\end{array}\right.
$$
\begin{clm}
Assume that one of the graphs $\mathfrak{T}$, or $\mathfrak{T}(+u_j^{\perp})$ or $\mathfrak{T}(+u_i^{\perp},+u_j^{\perp})$ satisfies Hall's $d$-harem conditions. 
Then in the first case the function $\widehat{h}_1$ witnesses $c.e.H.h.c.(d)$ for $\mathfrak{T}$, in the second case the function $\widehat{h}_2$ witnesses $c.e.H.h.c.(d)$ for $\mathfrak{T}(+u_j^{\perp})$ and in the third case the function $\widehat{h}_3$ witnesses $c.e.H.h.c.(d)$ for $\mathfrak{T}(+u_i^{\perp},+u_j^{\perp})$.
\end{clm}
\begin{proof}
It is a counterpart of Claim \ref{hall1}.  
Apply Lemma \ref{chc}. 
In the case of $\mathfrak{T}$, we view $\Gamma^{(0)}$ as $\Gamma$ and $\mathfrak{T}$ as $\Gamma^{\star}$ of that lemma. 
Furthermore, the number $\ell$ in its formulation is equal to the number of fans in both $\bigcup\limits_{i=1}^{n}M_{i-1}$ and $\Gamma^{(n)\perp}$ and then increased by 3  
(the number 3 corresponds to the fans of $u_n, u^{l+1}_n$ and $\dot{u}^{(n)\perp}$). 
Thus, $\ell \le 3n+3$.  
Since $\mathfrak{T}$ satisfies Hall's $d$-harem conditions, the lemma works.

In the second case we view $\Gamma^{(0)}$ as $\Gamma$ and $\mathfrak{T}(+u_j^{\perp})$ as $\Gamma^{\star}$ of Lemma \ref{chc}. 
The number $\ell$ is equal to the number of fans in both $\bigcup\limits_{i=1}^{n}M_{i-1}$ and $\Gamma^{(n)\perp}$ plus 3 (corresponding to the fans of $u_n, u^{l+1}_n$ and $\dot{u}^{(n)\perp}$) and minus 1 (corresponding to the fan of $u_j^{\perp}$). 
Therefore, it is at most $3n+2$.  
Since $\mathfrak{T}(+u_j^{\perp})$ satisfies Hall's $d$-harem conditions, Lemma \ref{chc} works again.

In the third case $\Gamma^{(0)}$ and $\mathfrak{T}(+u_i^{\perp},+u_j^{\perp})$ become $\Gamma$ and  $\Gamma^{\star}$. 
The number $\ell$ from Lemma \ref{chc} is equal to the number of fans in both $\bigcup\limits_{i=1}^{n}M_{i-1}$ and $\Gamma^{(n)\perp}$ plus 3 (corresponding to the fans of $u_n, u^{l+1}_n$ and $\dot{u}^{(n)\perp}$) and minus 2 (corresponding to the fans of $u_i^{\perp},u_j^{\perp}$). 
Therefore, $\ell \le 3n+1$. 
Since $\mathfrak{T}(+u_i^{\perp},+u_j^{\perp})$ satisfies Hall's $d$-harem conditions, Lemma \ref{chc} works again.
\end{proof}

Since (assuming that $u^{\perp}_i,u^{\perp}_j$ are computed) each of the graphs from the claim is a highly computable bipartite graph, it is enough to show that at least one of them satisfies Hall's $d$-harem condition (then apply the claim). 

Assume that $\mathfrak{T}$ does not satisfy Hall's $d$-harem condition. 
Let $u^{l+1}_n$ be the root of the last fan added to the matching $M_n$ in the second part of the $n$-th step; it belongs to the produced cycle of length $2$.
Recall that $\dot{v}_1,\dot{v}_2$ denote the vertices of the form $\dot{v}^{l+1}_{n,i}$ that were not added to $M^{l+1}_n$.
In particular in Case 3A there is only one such a vertex and in Case 3B there is either one or two such vertices. 

In the rest of the proof we will use the following two claims.

\begin{clm}\label{gn0}
Depending on the existence of $\dot{v}_2$ either the graph $\mathfrak{T}(-\dot{v}_1)$ or $\mathfrak{T}(-\dot{v}_1,-\dot{v}_2)$ satisfies Hall's $d$-harem condition.
\end{clm}

\begin{proof}
We only consider the case where $\dot{v}_2$ exists; the other case is analogous. 
It follows from the procedure of Section 5 that:
$$
\mathfrak{U}(-\dot{v}_1,-\dot{v}_2)=U^{(n)\star}(-u_n)\setminus\{u^{l+1}_n,\dot{u}^{\perp}_n\} 
$$
and 
$$
\mathfrak{V}(-\dot{v}_1,-\dot{v}_2)=V^{(n)\star}(-u_n)\setminus\{\dot{v}^{l+1}_{n,1},\ldots, \dot{v}^{l+1}_{n,d},\dot{v}^{\perp}_{n,1},\ldots, \dot{v}^{\perp}_{n,d}\}.
$$ 
Since $\mathfrak{M}^2_n$ is a perfect $(1,d)$-matching in the ball $\mathcal{B}^{(n)\star}(u^{l+1}_n)$ and $\mathfrak{T}(-\dot{v}_1,-\dot{v}_2)$ is obtained from $\Gamma^{(n)\star}(-u_n)$ by removing two fans from $\mathfrak{M}^2_n$, by Proposition \ref{hallincom} we know that $\mathfrak{T}(-\dot{v}_1,-\dot{v}_2)$ satisfies Hall's $d$-harem condition. 
\end{proof}

\begin{clm}\label{gn1}
For any $X\subset V^{(n+1)}$, we have
$$|N_{\Gamma^{(n+1)}}(X)|\geq\frac{1}{d}|X|.$$
\end{clm}
\begin{proof}
The inequality follows from Lemma \ref{neighsize}.
Indeed, consider $\Gamma^{(n+1)}$ to be $\Gamma$ from that lemma. 
Note that the construction of Section 5 guarantees that $\Gamma^{(n+1)}$ is $U^{(n+1)}$-reflected. 
Depending on existence of $\dot{v}_2$, put either $\dot{\Gamma}^{(n)\perp}(+\dot{v}_1)$ or $\dot{\Gamma}^{(n)\perp}(+\dot{v}_1,+\dot{v}_2)$ to be $\Gamma^{\perp}$ from that lemma. 
Then the corresponding graph $\Gamma^{\star}$ from the lemma is equal to either $\mathfrak{T}(-\dot{v}_1)$ or $\mathfrak{T}(-\dot{v}_1,-\dot{v}_2)$ and by Claim \ref{gn0} it satisfies Hall's $d$-harem condition. 
Therefore, the conditions of Lemma \ref{neighsize} are satisfied.
\end{proof}

\noindent 
We now return to the main course of the proof. From now on we consider the case of two additional vertices: $\dot{v}_1$ and $\dot{v}_2$. 
The case of only one of them, $\dot{v}_1$, is similar and simpler (in particular, we do not need two parts in the further proof). 

It is clear that the inequality $|N_{\mathfrak{T}}(X)|\geq |N_{\mathfrak{T}(-\dot{v}_1,-\dot{v}_2)}(X)|$ holds for all $X\subset \mathfrak{U}$.
This inequality also holds for subsets of $\mathfrak{V}$ which do not intersect $\{\dot{v}_1,\dot{v}_2\}$. 
On the other hand, applying Claim \ref{gn0} we see that the graph $\mathfrak{T}(-\dot{v}_1,-\dot{v}_2)$ satisfies Hall's $d$-harem condition. 
Therefore, if $\mathfrak{T}$ does not satisfy  Hall's $d$-harem condition, then it would be  witnessed by some finite subset of $\mathfrak{V}$ containing at least one of $\dot{v}_1,\dot{v}_2$.

The rest of the proof is divided into two parts. 

\noindent 
$\bullet$ \textit{Part 1.}
In this part of the proof we define a graph $\Gamma'$, a matching $M'$ and a family of fans $\dot{\Gamma}^{(n+1)\perp}$.  
We start with checking whether the graph $\mathfrak{T}(-\dot{v}_2)$ satisfies Hall's $d$-harem condition. 
If it does, we set 
\begin{itemize}
\item $M'$ to be a matching obtained by Proposition \ref{finmat} in $\mathfrak{T}(-\dot{v}_2)\cap \mathcal{B}(u^{l+1}_n)$;
\item $\dot{\Gamma}^{(n+1)\perp}:=\dot{\Gamma}^{(n)\perp}$;
\item $\Gamma' :=\mathfrak{T}$, denoting $\Gamma'=(U',V')$,  
\end{itemize} 
and finish Part 1 of the proof.

If the graph $\mathfrak{T}(-\dot{v}_2)$ does not satisfy Hall's $d$-harem condition, then there exists a minimal connected set $X$ such that $\dot{v}_1\in X\subset \mathfrak{V}(-\dot{v}_2)$ and $|N_{\mathfrak{T}(-\dot{v}_2)}(X)|<\frac{1}{d}|X|$.
Applying the argument of Claim \ref{ins}, one can show that such $X$ is a subset of $(\mathcal{B}(u^{l+1}_n)\setminus \mathcal{S}(u^{l+1}_n))$. 
In particular, such a set $X$ can be computed.

Observe that 
$|N_{\Gamma^{(n+1)}}(X)|\geq\frac{1}{d}|X|$
by Claim \ref{gn1}.

The inequalities 
$$
|N_{\Gamma^{(n+1)}}(X)|\geq\frac{1}{d}|X| \, \mbox{ and } 
\, |N_{\mathfrak{T}(-\dot{v}_2)}(X)|<\frac{1}{d}|X|
$$
imply 
$$ 
N_{\Gamma^{(n+1)}}(X)\cap \dot{U}^{(n)\perp}\neq\emptyset,
$$
i.e. there exists some $u^{\perp}_j\in N_{\Gamma^{(n+1)}}(X)$. 
Since $\Gamma^{(n+1)}$ is highly computable, the element $u^{\perp}_j$ and its fan from $\dot{\Gamma}^{(n)\perp}$ can be computed. Similarly as in Lemma \ref{1st} we denote by $v^{\perp}_{j,1},\ldots, v^{\perp}_{j,d-1}$ the remaining vertices of the fan containing $u^{\perp}_j$.

Consider the graph $\Gamma^{\star}$ induced in the graph $\mathfrak{T}(-\dot{v}_2)$ by the vertices adjacent to the edges of the matching $\mathfrak{M}^2_n$. 
Since this matching is a perfect $(1,d)$ matching in the ball $\mathcal{B}(u^{l+1}_n)\cap \Gamma^{(n)\star}$, it follows that $\Gamma^{\star}$ satisfies Hall's $d$-harem condition.
Under the circumstances of a bipartite graph $\Gamma^{(n+1)}$ and its subgraph $\Gamma^{\star}$, apply Lemma \ref{access} for $u^{\perp}_j$, $X$, $\dot{v}_1$ and arrive at
$u^{\perp}_j \xhookdoubleheadrightarrow{\mathfrak{M}^2_n,X} \dot{v}_1$. 
This gives us sequences $\{v_0',\ldots,v'_n\}, \{u_0',\ldots,u'_{n-1}\}$ as in Definition \ref{acc}.

We now apply an argument similar to one from the proof of Lemma \ref{1st}. 
We set
\begin{align*}
M':=(\mathfrak{M}^2_n\setminus\{(u^{l+1}_n,\dot{v}^{l+1}_{n,1}),\ldots, (u^{l+1}_n,\dot{v}^{l+1}_{n,d}),(u'_{0},v'_{0}),\ldots (u'_{n-1},v'_{n-1})\})\cup\\ \{(u^{\perp}_j,v^{\perp}_{j,1}),\ldots, (u^{\perp}_j,v^{\perp}_{j,d-1}), (u^{\perp}_j,v'_{0}), (u'_{0},v'_{1}),\ldots, (u'_{n-1},v'_{n})\}, 
\end{align*} 
where $v'_{n} =\dot{v}_1$. 
We have obtained $M'$ from $\mathfrak{M}^2_n$ by removing the $d$-fan of $u^{l+1}_n$, adding the $d$-fan of $u^{\perp}_j$, and the following replacement: for each of $u'_i$ we replace one edge incident to it by another incident edge (then $\dot{v}_1$ becomes adjacent to one edge in $M'$).
Observe that 
$$
(\mathfrak{U}\cup\{u_j^{\perp}\},(\mathfrak{V}\cup\{v_{j,1}^{\perp},\ldots, v_{j,d-1}^{\perp}\})\setminus \{\dot{v}_2\})=\mathfrak{T}(+u_j^{\perp})\setminus \{\dot{v}_2\}=\mathfrak{T}(+u_j^{\perp},-\dot{v}_2).
$$ 
Since $\mathfrak{M}^2_n$ was a $(1,d)$-perfect matching in the ball $\mathcal{B}(u^{l+1}_n)\cap \Gamma^{(n)\star}$,
the matching $M'$ satisfies the conditions of the perfect $(1,d)$-matchings for all susbsets of $\mathfrak{T}(+u_j^{\perp},-\dot{v}_2)\cap (\mathcal{B}(u^{l+1}_n)\setminus \mathcal{S}(u^{l+1}_n))$.
Therefore, $M'$ is a perfect $(1,d)$-matching in the ball $\mathfrak{T}(+u_j^{\perp},-\dot{v}_2)\cap \mathcal{B}(u^{l+1}_n)$.

The conditions of Lemma \ref{hccom} are satisfied if we consider $\Gamma^{(0)}$ as $\Gamma$ in this lemma, $\Gamma^{(n)\star}(-u_n)$ as $\Gamma^{\star}$, $M'$ as $\mathfrak{M}$, and the  graph $\mathfrak{T}(+u_j^{\perp},-\dot{v}_2)$ as $(\Gamma^{\star}\setminus \mathfrak{F}_u)\cup\mathfrak{S}$, i.e. $u^{l+1}_n$ is considered as $u$, and $u^{\perp}_j$ is considered as $u^{\perp}$ in that lemma. 
Therefore, by Lemma \ref{hccom} the graph $\mathfrak{T}(+u_j^{\perp},-\dot{v}_2)$ satisfies Hall's $d$-harem condition.

We define $\Gamma':=\mathfrak{T}(+u_j^{\perp})$, denote $\Gamma' = (U',V')$ and put 
$$\dot{\Gamma}^{(n+1)\perp}=(\dot{U}^{(n)\perp}\setminus \{u_j^{\perp}\},\dot{V}^{(n)\perp}\setminus \{v_{j,k}^{\perp}:1\leq k \leq d-1\}).$$
This ends the first part of the proof.

\noindent 
$\bullet$ \textit{Part 2.}
We check whether the graph $\Gamma'$ satisfies Hall's $d$-harem condition.
If it does, then by Part 1, $\Gamma'$ has to be equal to $\mathfrak{T}(+u_j^{\perp})$ and 
the proof is finished by the second option of the formulation.

If the graph $\Gamma'$ does not satisfy Hall's $d$-harem condition, then repeating the reasoning of Part 1, we see that there is (and it can be effectively found) a minimal connected set $X$ such that $\dot{v}_2\in X\subset V'\cap(\mathcal{B}(u^{l+1}_n)\setminus\mathcal{S}(u^{l+1}_n))$ and $|N_{\Gamma'}(X)|<\frac{1}{d}|X|$.  
Again, using Claim \ref{gn1} we obtain
$$
N_{\Gamma^{(n+1)}}(X)\cap \dot{U}^{(n)\perp}\neq\emptyset,  
$$ 
i.e. there exists some $u^{\perp}_i\in N_{\Gamma^{(n+1)}}(X)$. 
By high computability of $\Gamma^{(n+1)}$, this element can be computed. 
As usual, we denote by $v^{\perp}_{i,1},\ldots, v^{\perp}_{i,d-1}$ the vertices adjacent to $u^{\perp}_j$ in $\dot{\Gamma}^{(n+1)\perp}$.

The matching $M'$ obtained in the first part of the proof is a perfect $(1,d)$-matching either in the ball $\mathfrak{T}(-\dot{v}_2)\cap \mathcal{B}(u^{l+1}_n)$, or in the ball $\mathfrak{T}(+u_j^{\perp},-\dot{v}_2)\cap \mathcal{B}(u^{l+1}_n)$. 
Each of them is a subgraph of $\Gamma^{(n+1)}$. 
Applying Lemma \ref{access} in the appropriate case, for $u^{\perp}_i, X,\dot{v}_2$,   
we have 
$u^{\perp}_i \xhookdoubleheadrightarrow{M',X} \dot{v}_2$. 
Again, by the argument of Lemma \ref{1st} we obtain a matching 
$$
M'':=(M'\setminus\{(u'_{0},v'_{0}),\ldots (u'_{n-1},v'_{n-1})\})\cup$$
$$\{(u^{\perp}_i,v^{\perp}_{i,1}),\ldots, (u^{\perp}_i,v^{\perp}_{i,d-1}),(u^{\perp}_i,v'_{0}), (u'_{0},v'_{1}), \ldots,(u'_{n-1},v'_{n})\}.
$$
We have obtained $M''$ from $M'$ by adding $d$ edges incident to $u^{\perp}_i$, and replacing one edge in the matching for each of $u'_k$ in such a way, that $\dot{v}_2$ is adjacent to an edge in $M''$. 
As a result, $M''$ is the perfect $(1,d)$-matching in the ball $\Gamma'(+u_i^{\perp})\cap (\mathcal{B}(u^{l+1}_n)$.

Now the conditions of Lemma \ref{hccom} are satisfied if we consider $\Gamma^{(0)}$ as $\Gamma$ in this lemma, $\Gamma^{(n)\star}(-u_n)$ as $\Gamma^{\star}$, $M''$ as $\mathfrak{M}$, and $\Gamma'(+u_i^{\perp})$ as $(\Gamma^{\star}\setminus \mathfrak{F}_u)\cup\mathfrak{S}$, i.e. $u^{l+1}_n$ is considered as $u$, and $u^{\perp}_i$ as $u^{\perp}$ in that lemma. 
Therefore, by Lemma \ref{hccom}, $\Gamma'(+u_i^{\perp})$ satisfies Hall's $d$-harem condition.

The final argument depends on two possible outputs of Part 1.  
If the graph $\mathfrak{T}(-\dot{v}_2)$ does not satisfy  Hall's $d$-harem condition (i.e. $u^{\perp}_j$ is involved), then $M''$ is a perfect $(1,d)$-matching in the graph $\mathfrak{T}(+u_i^{\perp},+u_j^{\perp})$. 
In the contrary case we redefine $u^{\perp}_j:=u^{\perp}_i$ and then $M''$ becomes a perfect $(1,d)$-matching in the graph $\mathfrak{T}(+u_j^{\perp})$. 
Therefore, if $\mathfrak{T}$ does not satisfy Hall's $d$-harem condition, then either $\mathfrak{T}(+u_j^{\perp})$ or $\mathfrak{T}(+u_i^{\perp},+u_j^{\perp})$ satisfies this condition.
\end{proof}

\section{Proof of the main theorem}\label{pMT}
We remind the formulation of Theorem \ref{ehhc}. 

\bigskip 

\noindent 
{\em 
Let $\Gamma=(U,V,E)$ be a highly computable bipartite graph, such that: 
\begin{itemize}
\item both $U$ and $V$ are identified with $\mathbb{N}\setminus\{0\}$
\item $E$ does not contain edges of the form $(u,v_u)$
\item  $\Gamma$ is fully reflected,  
\item $\Gamma$ satisfies the $c.e.H.h.c.(d)$.
\end{itemize}
Then there exist a computable perfect $(1,d-1)$-matching of $\Gamma$, which realizes a computable $(d-1)$ to $1$ function $f:\mathbb{N}\rightarrow \mathbb{N}$ with controlled sizes of its cycles. } 

\bigskip 

\begin{proof}
Let us apply the construction of Section \ref{const}.
This construction works modulo Claims \ref{c1p1ef}, \ref{c1p2ef}, \ref{csnp1ef}, \ref{cs1p2ef}. 
Claims \ref{c1p1ef} and \ref{csnp1ef} follow from Lemma \ref{1st}. Claims \ref{c1p2ef} and \ref{cs1p2ef} follow from Lemma \ref{2nd}.
Since for every $n$ the union  $\bigcup\limits_{i=0}^n M_i$ consists of $(1,d-1)$-fans, the final set of edges $M$ is a $(1,d-1)$-matching.

For every $u\in U$ there is a step where an edge incident to $u$ is added to $M$. 
Furthermore, if the copy $v_u$ was not added to $M$ earlier, the construction ensures that $v_u$ is also added to $M$ in the second part of this step. 
This guarantees that $M$ is a perfect $(1,d-1)$-matching of the graph $\Gamma$. 
It also implies computability of this matching.  
Indeed, it is easy to see from the description given in Section 5 that a vertex $n$ 
appears in $M$ at the $n$-th step at latest. 
This follows from minimality of $u_n$ in $U^{(n-1)}$.  
Thus, for any $n$ all edges from $M$ of the forms $(n,j)$ and $(k,n)$ (the unique one) are already in $\bigcup\limits_{i=0}^n M_i$. 
Therefore, there is an algorithm which computes all such pairs from $M$ for any input $n$. 
We see that the function $f$ realized by this matching is computable. 

Until the end of the proof we will abuse the notation and use a natural number $n$ both for vertices and numbers of steps.

It remains to show that $f$ has controlled sizes of its cycle. 
To see condition $(i)$ of Definition \ref{cycles} note that the edges $(u_0, v^0_{0,1})$ and $(v_{u_0},u_{v^0_{0,1}})$ are added to $M$ at step 1, i.e. $f^2(u_0)=f(u_{v^0_{0,1}})=u_0$. 
Since $u_0=1$, condition $(i)$ is satisfied.

As a vertex $n$ appears in $M$ at the $n$-th step at latest, the length of a cycle created at the $n$-th step cannot be greater than $\max\{2,n\}$.  
In particular, if $n$ is in a cycle then $f^i(n)=n$ for some $i\leq n$ and condition $(ii)$ of Definition \ref{cycles} is satisfied. 

It remains to show that condition $(iii)$ is satisfied.
Let $\mathsf{f}_i$ be the partial function (living in $\mathbb{N}$) realized by $M_i$. 
Then $M_i = \{(\mathsf{f}_i(m),m):m\in\mathsf{Dom}(\mathsf{f}_i)\}$ is the graph of $\mathsf{f}_i$.  
We denote by $f_n$ the partial function  realized by $\bigcup\limits_{i=0}^{n}M_i$.
Let $F_n = \{(f_n(m),m):m\in\mathsf{Dom} (f_n )\}$ be the graph of $f_n$ on $\mathsf{Dom} (f_n )\cup \mathsf{Rng} (f_n )$. 
Then each $M_i$ with $i\le n$, is a subgraph of $F_n$. 

Since each subgraph $M_i$ is connected,  $F_n$ has at most $n+1$ connected components. 
Furthermore, one of the following properties holds:
\begin{itemize}
\item $M_n$ is a connected component of $F_n$ and contains a cycle;
\item $M_n$ is connected and there is a vertex of degree $1$ in $F_{n-1}$ which is an image of a vertex from $M_n$.  
\end{itemize} 
The first possibility arises in cases (2) and (3) of part 2 of the $(n+1)$-th step. 
The second possibility is the result of case (1) of this part of the step (see Section \ref{nplus1p2} and Remarks before part 2). 
Since each element is included into $M$ together with a fan, this property guarantees that $F_n$ consists only of vertices of degree $1$ and $d$.  
When a vertex has degree $1$, then its $F_n$-neighborhood is exactly its $f_n$-image.
When a vertex has degree $d$, then its $f_n$-image and $d-1$ preimages are in $F_n$.
In particular, each connected component of $F_n$ contains a cycle. 
The length of the cycle is not greater than $\max\{n,2\}$.

Since the value $f_{n-1} (n)$ is defined, $n$ belongs to some connected component of $f_{n-1}$. 
Thus, there exist $k$ and $l$ such that $f_{n-1}^{k+l}(n)=f_{n-1}^k(n)$.
These $k$ and $l$ work for the equality $f^{k+l}(n)=f^k(n)$. 
It remains to show that $k$ can be bounded by $2n$ and $l$ by $n$. 

The latter estimate is easy: the biggest cycle that can be constructed before the $n$-th step, is not longer than $\max\{n-1,2\}$. 
Below we will only use the inequality $l \le n$ for simplicity. 

In order to show that $k$ is bounded by $2n$, let us estimate the size of a subset of $U$ that can be added to the matching $M$ in the process of the $n$-th step of the construction, in the situation when a new cycle is not created. 
It must consist of elements of $U^{(n-1)\perp}$ added to the matching at the iteration of part $2$ of the $n$-th step together with $u_n$.
Therefore, we can bound it by the maximal possible number $|U^{(n-1)\perp}|+1$.

Let us denote the number of elements from $U^{(s-1)\perp}$ added to $M$ at the $s$-th step by $\ell_s$.
Then for each $m\in M_s$ we have $f^{\ell_s +1}(m)\in M_j$ where $j \leq s-1$. 
If no cycle is constructed at the $j$-th step then $f^{\ell_{j}+1}(f^{\ell_{s}+1}(m))\in M_{i}$ for some $i\leq j-1$. 
Iterating this argument and applying it to $m:=n$ we arrive at 
$$
k\leq \sum\limits_{s=1}^n (\ell_s+1).
$$ 
Since at the $n$-th step the value $|\bigcup\limits_{s=1}^{n-1} U^{(s)\perp}|$ does not exceed $n-1$, 
$$
\sum\limits_{s=1}^n \ell_s \le n-1.
$$
We see that $k\leq 2n-1$. 
Thus condition $(iii)$ of Definition \ref{cycles} is satisfied.
\end{proof}

\section{Schneider's theorem and computable entourages of coarse spaces}\label{coarse}

We are ready to prove Theorem A from the introduction. 
We repeat it. 

\begin{thm}
Let $d\geq 3$. 
Let $(\mathbb{N},\mathcal{E})$ be a non-amenable coarse space of a bounded geometry. 
Assume that there exists a highly computable symmetric $E\in\mathcal{E}$ such that for every finite $F\subseteq \mathbb{N}$ we have $|E[F]|\geq (d+2)|F|$. 

Then there exists a computable $E'\in\mathcal{E}$ such that $\Gamma(E')$ is a $d$-regular forest. 
Moreover, there exists an algorithm which for every $m,n\in\mathbb{N}$ recognizes if $m$ and $n$ are in the same connected component of $\Gamma(E')$.
\end{thm}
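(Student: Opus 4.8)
The plan is to reduce the statement to Theorem~\ref{ehhc} and then to turn the $(d-1)$-to-$1$ function it yields into the required forest, in the spirit of Schneider's argument in \cite{schndr}. First I would relabel $\mathbb{N}$ as $\mathbb{N}\setminus\{0\}$ and form the bipartite graph $\Gamma=(A,B,\tilde E)$ whose parts $A$, $B$ are two copies of $\mathbb{N}\setminus\{0\}$ and in which $(a,b)\in\tilde E$ iff $a\neq b$ as elements of $\mathbb{N}$ and $(a,b)\in E$. Since $E$ is symmetric, $\Gamma$ is symmetric and has no edge $(v,b_v)$, and it is locally finite because $(\mathbb{N},\mathcal{E})$ has bounded geometry. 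For finite $X\subseteq A$ one has $|N_{\Gamma}(X)|\geq |E[X]|-|X|\geq (d+2)|X|-|X|=(d+1)|X|$, and by symmetry of $E$ the same bound holds for finite subsets of $B$; hence $h=\mathrm{id}$ (with $h(0)=0$) witnesses $c.e.H.h.c.(d)$. Finally $\Gamma$ is highly computable: $\tilde E$ is computable since $E$ is, and the neighbourhood-size function is recursive by the computability assumptions on the coarse space (this may also be taken as part of the hypothesis). Spelling all of this out is routine.

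Applying Theorem~\ref{ehhc} to $\Gamma$ yields a recursive $(d-1)$-to-$1$ function $f\colon\mathbb{N}\setminus\{0\}\to\mathbb{N}\setminus\{0\}$ with properties (i)--(iv). Pulling $f$ back to $\mathbb{N}$, property (iv) reads $(f(n),n)\in E$, so the symmetric relation $E_0:=\Delta_{\mathbb{N}}\cup\{(n,f(n)):n\in\mathbb{N}\}\cup\{(f(n),n):n\in\mathbb{N}\}$ satisfies $E_0\subseteq\Delta_{\mathbb{N}}\cup E\cup E^{-1}\in\mathcal{E}$, hence $E_0\in\mathcal{E}$ by downward closure, and $E_0$ is computable because $f$ is. The graph $\Gamma(E_0)$ is the symmetrised functional graph of $f$. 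A direct count shows that a vertex not on a $2$-cycle of $f$ has degree exactly $d$, whereas the two endpoints of a $2$-cycle have degree $d-1$; moreover a component of $f$ whose unique cycle has length $2$ is already a tree in $\Gamma(E_0)$, so only cycles of length $\geq 3$ have to be destroyed, while in every component precisely two vertices are deficient by one. Finally, properties (i)--(iii) control the cyclic part of $f$ tightly: vertex $1$ sits on a $2$-cycle, and any $n\geq 2$ lies on a cycle of length $\leq n$ or runs into one within $2n$ applications of $f$; so the cycles of $f$, and thus the deficient vertices of every component, are effectively locatable from any label.

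The core of the proof, and the step I expect to be the main obstacle, is to turn $\Gamma(E_0)$ into an honest $d$-regular forest $\Gamma(E')$ with $E'\in\mathcal{E}$. The plan is to use the cycle control just recorded: cut every cycle of $f$ of length $\geq 3$ at a canonically chosen edge, and then perform a coordinated re-wiring that supplies one additional edge at each of the two deficient vertices of every component, taking all added edges from a fixed entourage such as $E\cup E^{-1}$ (or a bounded composition thereof), so that $E'\in\mathcal{E}$ persists. The subtlety is that matching the deficient vertices pairwise in a naive way would re-introduce cycles, so the re-wiring has to be laid out along an infinite pattern; arranging it so that the result is at once effective, acyclic, and exactly $d$-regular is the delicate point, and this is exactly where the conditions (i)--(iii) of Theorem~\ref{ehhc} --- which are not available from the earlier Theorem~\ref{HC} --- are used. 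The degree accounting and the verification of acyclicity after the re-wiring form the bulk of the work.

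For the last assertion, observe that the re-wiring removes only edges that lie on cycles of $f$ (hence are not bridges) and inserts edges between vertices that the bound in (iii) makes explicit, so the connected components of $\Gamma(E')$ are obtained from those of the functional graph of $f$ by an effectively described identification. Two vertices $m,n$ lie in the same $f$-component iff their forward orbits eventually coincide; by (iii) the forward orbit of a vertex $v$ meets its cycle within about $2v$ iterations and that cycle has length at most $v$, so iterating $f$ on $m$ and on $n$ for that many steps, comparing the two cycles reached, and consulting the record of which $f$-components were identified by the re-wiring decides whether $m$ and $n$ belong to the same component of $\Gamma(E')$. This produces the algorithm claimed in the theorem and finishes the plan.
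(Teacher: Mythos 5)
The reduction is sound and matches the paper: you pass to the symmetric relation $E\setminus\Delta_{\mathbb{N}}$, form the bipartite graph on two copies of $\mathbb{N}$, check that $h=\mathrm{id}$ witnesses $c.e.H.h.c.(d)$ from the bound $|E[F]|\geq(d+2)|F|$, and invoke Theorem~\ref{ehhc} to get the recursive $(d-1)$-to-$1$ function $f$. Your degree bookkeeping for the symmetrised functional graph of $f$ (degree $d$ off $2$-cycles, degree $d-1$ on them, two deficient vertices per component after cutting a $\geq 3$-cycle) is also correct.

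But the proof stops exactly where the real work begins. You write that turning $\Gamma(E_0)$ into a $d$-regular forest is ``the delicate point'' and that ``the degree accounting and the verification of acyclicity after the re-wiring form the bulk of the work'' --- and then you do not supply the re-wiring. Nothing is actually constructed: there is no definition of $E'$, no proof that it lies in $\mathcal{E}$, no proof of $d$-regularity, no proof of acyclicity, and no effective description. This is the entire content of the theorem beyond the reduction, and it is missing.

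Moreover, the plan you sketch for the re-wiring --- cut each long cycle, leaving two deficient vertices per component, and then ``match the deficient vertices pairwise'' along some infinite pattern, drawing added edges from a fixed entourage --- is not how the paper (following Schneider) actually does it, and as stated it has a serious obstruction: the deficient vertices of distinct $f$-components can be arbitrarily far apart in the coarse structure, so any pairing of deficient vertices across components will in general fail to lie in a bounded composition of $E$. The paper instead defines $f_\star$ by re-routing entirely \emph{within} each component, along two rays $g(y,\cdot)$ and $h(y,\cdot)$ emanating from the unique cycle. Concretely, $f_\star$ shifts even-indexed $g$-points forward by two, odd-indexed $g$-points (from $m\geq 3$) back by two, and $h$-points with $m\geq 2$ by $f^2$; this simultaneously breaks the cycle and restores the two degree deficiencies without ever leaving the component, so $\{(n,f_\star(n))\}$ sits inside a bounded power of $E$ by construction and every vertex has exactly $d-1$ preimages. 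Conditions (i)--(iii) are then used to show that $P(f)$, $P_0$, $g$, $h$ and hence $f_\star$ are recursive. Your final paragraph about ``consulting the record of which $f$-components were identified by the re-wiring'' also suggests you expect components to merge; in fact Schneider's re-wiring preserves components, which is what makes the same-component algorithm (trace both vertices to their unique representative in $P_0$ using (ii) and (iii)) straightforward. You should replace your third and fourth paragraphs with the explicit $f_\star$ construction and verify its properties.
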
 

\begin{proof} 
Let $E$ be a highly computable symmetric entourage as in the formulation. 
Consider the graph $\Gamma(R)$ defined for the symmetric relation $R:= E\setminus \Delta_{\mathbb{N}}\subseteq \mathbb{N}\times\mathbb{N}$.
Since the coarse space $(\mathbb{N},\mathcal{E})$ is of a bounded geometry, the neighborhood of each vertex of $\Gamma (R)$ is finite.  
Further, the graph $\Gamma(R)$ is highly computable by high computability of $E$. 

Firstly, we apply Theorem \ref{ehhc} to $\Gamma (R)$. 
In order to do this, we only need to show that $\Gamma(R)$ satisfies $c.e.H.h.c.(d)$.
To see the latter, note:
$$
|R[F]|\geq |E[F]\setminus F|\geq |E[F]|-|F|\geq (d+1)|F|.
$$
Thus, for all finite sets $X\subset \mathbb{N}$:  
$$|N_{\Gamma(R)}(X)|-d|X|\ge (d+1)|X|-d|X|=|X|.
$$ 
In particular, the inequality $n\leq |X|$ implies $n\leq |N(X)|-d|X|\leq |N(X)|-\frac{1}{d}|X|$.
Since the identity map on $\mathbb{N}$ is a total computable function, it follows that $\Gamma(R)$ satisfies $c.e.H.h.c.(d)$.

Now Theorem \ref{ehhc} provides a computable function $f$ that realizes a perfect $(1,d-1)$ matching in the graph $\Gamma(R)$ and has  controlled sizes of its cycles. 
This function will be used in the construction of $E'\in\mathcal{E}$ as in the formulation of the theorem.  
We will adapt the proof of Theorem 2.2 of \cite{schndr}.

Since $f$ is a total, computable, surjective, and $(d-1)$ to $1$ function, the graph of $f$ (denoted by $\Gamma (f)$) is computable and $d$-regular. 
We remind the reader that $f$ satisfies the following properties:
\begin{enumerate}[label={(\roman*)}] 
\item $f^2(1)=1$;
\item if $n\geq 2$ and $f^i(n)=n$ then $i\leq n $;
\item if $n\geq 2$ and for all $i\leq n$ we have $f^i(n)\neq n$ then there exist $k\leq 2n$ and $l\leq n$ such that $f^{k+l}(n)=f^k(n)$; 
\item for each $n$ the pair $(n,f(n))$ belongs to $R$.
\end{enumerate}
Since $R=E\setminus \Delta_X$ the last property implies that $f$ does not have fixed points.
Let 
$$
P(f)=\{n\in \mathbb{N} \, | \, \exists m\geq 1 (f^m(n)=n) \} \, \mbox{ and } \, P_0 (f) = \{n\in P(f) \, | \, \forall m\geq 1 (f^m(n)\geq n) \}, 
$$ 
i.e. $P(f)$ is the union of all cycles and $P_0 (f)$ is the set of minimal representatives of cycles.  
By property (ii) there is an algorithm which for every $n\in\mathbb{N}$ verifies whether $n\in P(f)$, i.e. $P(f)$ is computable. 
Observe that $P_0 (f)$ is computable too. 
Indeed, the number $1$ obviously belongs to $P_0 (f)$. 
When $n\geq 2$ and $n\in P(f)$, then verifying if $f^i(n)\ge n$ for all $i\leq n$ we can check whether $n\in P_0 (f)$ (apply (ii) again). 

Since there is no component with two disjoint cycles, we see that whenever $n,m\in P_0(f)$ and $n\neq m$, then $n$ and $m$ do not belong to the same connected component of the graph $\Gamma (f)$.
Thus,
$$
P(f)=\dot{\bigsqcup}_{ n\in P_0 (f) } \{f^m(n) \, |  \, m\in\mathbb{N} \} .
$$ 
Further, there is an algorithm which for every $n\in \mathbb{N}$ finds  the 
$P_0(f)$-representative of the connected component of $n$. 
Indeed, if for example $n\notin P(f)$, then applying (iii) we compute $i\leq 2n$ such that $f^i(n)\in P(f)$, and later we find $j\leq n$ such that $f^{i+j}(n)\in P_0 (f)$.

Based on this we want to construct a new computable function $f_{\star}$ such that its graph (denoted by $\Gamma (f_{\star})$) is a computable $d$-regular forest. 
Let us start with two auxiliary functions $g,h: P_0 (f)\times \mathbb{N}\rightarrow \mathbb{N}$ such that, for all $n\in P_0 (f)$ and $m\geq 1$ the following properties hold: 
\begin{itemize}
\item $g(n,0)=n$ and $h(n,0)=f(n)$;
\item $\{g(n,m),h(n,m)\}\cap P(f)=\emptyset$;
\item $f(g(n,m))=g(n,m-1)$, and $f(h(n,m))=h(n,m-1)$.
\end{itemize} 
We want $g,h$ to be computable functions. 
Since $P_0 (f)$ and $P(f)$ are computable and the graph $\Gamma (f)$ is computable and $d$-regular, the following rule gives the required algorithm. 
Given $n\in P_0 (f)$ and $m\geq 1$ and having already defined $g(n,m-1)$, find the minimal $x$ such that $x \notin P(f)$ and $f(x)=g(n,m-1)$. 
Then let $g(n,m) =x$. 
The definition of $h(n,m)$ is similar. 
Clearly $g$ and $h$ are injective and have disjoint ranges.

Now we define $f_{\star}:\mathbb{N}\rightarrow\mathbb{N}$ for $x\in\mathbb{N}$ in the following way:
$$
f_{\star}(x)=\left\{\begin{array}{ll}
g(y,m+2) & \text{ if } x=g(y,m) \text{ for } y\in P_0 (f) \text{ and even } m\geq 0,\\
g(y,m-2) & \text{ if } x=g(y,m) \text{ for } y\in P_0 (f) \text{ and odd } m\geq 3,\\
f^2(x) & \text{ if } x=h(y,m) \text{ for } y\in P_0 (f) \text{ and } m\geq 2,\\
f(x) & \text{ otherwise.}
\end{array}\right.
$$ 
We remind the reader that for each $n$ there is $i\leq 3n$ such that $f^{i}(n) \in P_0 (f)$. 
Thus for any $m'>3n$ the number $n$ does not belong to $\{ g(y,m'), h(y,m')\}$. 
This guarantees that $f_\star$ is computable. 
The facts that $\Gamma (f_{\star})\in\mathcal{E}$, the function $f_{\star}$ does not have cycles, and for each $x\in\mathbb{N}$ the size $|f^{-1}_{\star}(x)|$ is $d-1$ are proved in \cite[proof of Theorem 2.2]{schndr}. 
Therefore, the graph $\Gamma (f_{\star})$ is a computable $d$-regular forest. 

To see the last statement of the theorem note that if $C\subset\mathbb{N}$ is a connected components of $\Gamma (f)$ then it is also the set of vertices of a connected component of $\Gamma (f_{\star})$. 
Since for each $n$ one can compute $n'\in P_0 (f)$ such that $n$ and $n'$ are in same connected component of $\Gamma (f)$, there is a straightforward algorithm which for any $n$ and $m$ verifies whether $n$ and $m$ are in the same tree in $\Gamma (f_{\star})$.
\end{proof}   

\bigskip 

We now translate Theorem A into a statement about wobbling groups. 
The following theorem is Theorem B from the introduction. 
It is a computable version of Corollary 2.3 of \cite{schndr}.

\begin{thm}
Let $(\mathbb{N},\mathcal{E})$ be a non-amenable coarse space of a bounded geometry. 
Assume that there exists a highly computable symmetric $E\in\mathcal{E}$ such that $|E[F]|\geq 6|F|$ for all finite $F\subset \mathbb{N}$.
Then there are two computable permutations $\sigma,\pi\in Sym(\mathbb{N})$ such that $\langle\sigma,\pi\rangle$ is a free semi-regular subgroup of $\mathcal{W}(\mathbb{N},\mathcal{E})$.
\end{thm}

\begin{proof}
By Theorem \ref{fg} there is $E'\in \mathcal{E}$ such that $\Gamma(E')$ is a computable $4$-regular forest. 
Moreover, there is an algorithm which for each $n,m\in\mathbb{N}$ recognizes if $n$ and $m$ are in the same connected component of $\Gamma(E')$.
Since $\Gamma (E')$ is a computable $4$-regular forest, there is another algorithm which for each $n$ finds  four natural numbers $n_1 < n_2< n_3< n_4$ such that for every $n_i$ either $(n,n_i)\in E'$ or $(n_i,n)\in E'$. 
Furthermore, for each input $n,m\in \mathbb{N}$, the output $B_m(n)\subset \mathbb{N}$, the $m$-ball of $n$, can be effectively found by a uniform algorithm. 

The standard Cayley graph of the free two generated group is isomorphic to the $4$-regular tree, so it is clear that there exist permutations $\sigma$,$\pi$ as in the formulation. We will show that there exist computable ones.
The construction is by induction. 
At every step we construct  two finite partial permutations of $\Gamma(E')$. 

At the first step of the construction of $\sigma$ and $\pi$ put $D_{1}=B_1(1)=\{1,1_1,1_2,1_3,1_4\}$ and define $\sigma_1$ and $\pi_1$ on $D_{1}$ so that $\sigma_1(1)=1_1,\sigma_1(1_2)=1, \pi_1(1)=1_3,\pi_1(1_4)=1$, and for every $i\le 4$, 
$\{ \sigma_1(1_i)^{\pm 1}, \pi_1 (1_i )^{\pm 1} \} \subset B_1(1_i )$ with $\sigma^{\pm 1}_1(1_i) \not=\pi^{\pm 1}_1 (1_i )$ (whenever the corresponding preimages are defined); the set $D_1 \cup \sigma_1 (D_1 ) \cup \pi_1 (D_1 )$ consists of 11 elements. 

Assume that before the $n$-th step of the construction two finite partial permutations $\sigma_{n-1}$ and $\pi_{n-1}$ 
are defined on some set $D_{n-1}$ such that  
for all $a\in D_{n-1}$, $\{ \sigma^{\pm 1}_{n-1} (a) , \pi^{\pm 1}_{n-1} (a) \} \subset B_1 (a)$.  

At step $n$ we first put 
$\sigma_n(a)=\sigma_{n-1}(a)$ and $\pi_n(a)=\pi_{n-1}(a)$ for all $a\in D_{n-1}$. 
Let $a_n$ be the first number in $\mathbb{N}\setminus D_{n-1}$. 
Let $X$ be the connected component of $\Gamma(E')$ containing $a_n$. 
If it does not meet $D_{n-1}$, then we find the corresponding $(a_n)_1,(a_n)_2,(a_n)_3,(a_n)_4$, and define $\sigma_n$ and $\pi_n$ on $\{ a_n ,(a_n)_1,(a_n)_2,(a_n)_3,(a_n)_4\}$ exactly as at the first step. 
Then put $D_n=D_{n-1}\cup B_1(a_n)$. 

It is worth mentioning at this stage that according to the assumptions on $\Gamma(E')$, the question if 
$X\cap D_{n-1} \not=\emptyset$ is decidable: just check if $a_n$ is in the same component with elements of $D_{n-1}$.  

Now, assume that the answer is positive. 
Then find the minimal $i<n$ such that $a_i\in X$ and the minimal $m$ such that $a_n\in B_m(a_i)$. 
Put $D_n=D_{n-1}\cup B_{m}(a_i)$. 
Extend $\sigma_{n-1} \upharpoonright_X$ and $\pi_{n-1} \upharpoonright_X$ to partial permutations defined on $B_{m}(a_i)$ such that for every $a\in B_m(a_i )$, $\{ \sigma^{\pm 1}_n (a ) ,\pi^{\pm 1}_n(a) \} \subset B_1(a)$, and $\sigma^{\pm 1}_n(a)\neq\pi^{\pm 1}_n(a)$ (whenever the corresponding preimages are defined). 

The set $B_m (a)$ can be viewed as the set of all $\mathsf{w}(a)$ where $\mathsf{w}$ is a group word 
of length $\le n$ of the alphabet $\{ \ell , r \}$. 
Then $\sigma_n$ (resp. $\pi_n$) takes $\mathsf{w} (a)$ with the image undefined before, to $\ell \mathsf{w} (a)$ (resp. $r \mathsf{w} (a))$. 

Let $\sigma=\lim\limits_{n\in\mathbb{N}} \sigma_n$, and $\pi=\lim\limits_{n\in\mathbb{N}} \pi_n$. 
These permutations act on each connected component of $\Gamma(E')$ by mapping elements to their neighbors.  
Clearly, there are no points which are fixed by non-identity elements of the group $\langle\sigma,\pi\rangle$. 
It is a free semi-regular subgroup of $\mathcal{W}(\mathbb{N},\mathcal{E})$. 
Permutations $\sigma$ and $\pi$ are computable by the construction. 
\end{proof}

\printbibliography

\end{document}